\theoremstyle{plain}
\newtheorem{thm}{Theorem}[section]
\newtheorem{lem}[thm]{Lemma}
\newtheorem{pro}[thm]{Proposition}
\newtheorem{co}[thm]{Corollary}
\theoremstyle{definition}
\newtheorem{defn}[thm]{Definition}
\theoremstyle{remark}
\newtheorem{example}[thm]{Example}
\newcommand{\Gtwo}{\ifmmode{{\rm G}_2}\else{${\rm G}_2$}\fi}
\date{\today}
\begin{document}

\title[]
 {On quasi-para-Sasakian manifolds}

\author[S. Zamkovoy]{Simeon Zamkovoy}

\address{
University of Sofia "St. Kl. Ohridski"\\
Faculty of Mathematics and Informatics\\
Blvd. James Bourchier 5\\
1164 Sofia, Bulgaria}
\email{zamkovoy@fmi.uni-sofia.bg}



\subjclass{}

\keywords{quasi-para-Sasakian manifolds, 3-dimensional quasi-para-Sasakian manifolds, locally $\varphi-$symmetric manifolds, manifolds of constant curvature, $\eta-$parallel Ricci tensor}


\begin{abstract}
In this paper we study quasi-para-Sasakian manifolds.
We characterize these manifolds by tensor equations and study their properties. We are devoted to the study of $\eta-$Einstein manifolds.
We show that a conformally flat quasi-para-Sasakian manifold is a space of constant negative curvature $-1$
and we prove that if a quasi-para-Sasakian manifold is a space of constant $\varphi-$para-holomorphic sectional curvature $H=-1$,
then it is a space of constant curvature. Finally, the object of the present paper is to study a 3-dimensional quasi-para-Sasakian manifold, satisfying certain curvature conditions.
Among other, it is proved that any 3-dimensional quasi-para-Sasakian manifold with $\eta-$parallel Ricci tensor is of constant scalar curvature.
\end{abstract}

\newcommand{\g}{\mathfrak{g}}
\newcommand{\s}{\mathfrak{S}}
\newcommand{\D}{\mathcal{D}}
\newcommand{\F}{\mathcal{F}}
\newcommand{\R}{\mathbb{R}}
\newcommand{\K}{\mathbb{K}}
\newcommand{\U}{\mathbb{U}}
\newcommand{\diag}{\mathrm{diag}}
\newcommand{\End}{\mathrm{End}}
\newcommand{\im}{\mathrm{Im}}
\newcommand{\id}{\mathrm{id}}
\newcommand{\Hom}{\mathrm{Hom}}

\newcommand{\Rad}{\mathrm{Rad}}
\newcommand{\rank}{\mathrm{rank}}
\newcommand{\const}{\mathrm{const}}
\newcommand{\tr}{{\rm tr}}
\newcommand{\ltr}{\mathrm{ltr}}
\newcommand{\codim}{\mathrm{codim}}
\newcommand{\Ker}{\mathrm{Ker}}

\newcommand{\thmref}[1]{Theorem~\ref{#1}}
\newcommand{\propref}[1]{Proposition~\ref{#1}}
\newcommand{\corref}[1]{Corollary~\ref{#1}}
\newcommand{\secref}[1]{\S\ref{#1}}
\newcommand{\lemref}[1]{Lemma~\ref{#1}}
\newcommand{\dfnref}[1]{Definition~\ref{#1}}


\newcommand{\ee}{\end{equation}}
\newcommand{\be}[1]{\begin{equation}\label{#1}}

\maketitle

\section{Introduction}\label{sec-1}
In this paper we study a class of paracontact pseudo-Riemannian manifolds satisfying some special conditions. These manifolds are
analogues to the quasi-Sasakian manifolds and they belong of the class $\mathbb{G}_5$ of the classification given in \cite{ZN}.
We characterize these manifolds by tensor equations and study their properties. From the definition
by means of the tensor equations, it is easily verified that the structure is normal, but not para-Sasakian.
We are devoted to the study of $\eta-$Einstein manifolds. We show that a conformally flat quasi-para-Sasakian manifold is a space of constant negative curvature $-1$
and we prove that if a quasi-para-Sasakian manifold is a space of constant $\varphi-$para-holomorphic sectional curvature $H=-1$,
then it is a space of constant curvature. In the last section, we study the 3-dimensional quasi-para-Sasakian manifolds. We prove that any 3-dimensional quasi-para-Sasakian manifold
satisfying the condition $R(X,Y).Ric=0$ is a manifold of constant negative curvature, where $R(X,Y)$ is considered as a derivation
of the tensor algebra at each point of manifold ($X,Y$ are tangent vectors). We study locally $\varphi-$symmetric quasi-para-Sasakian
manifolds and obtain a necessary and sufficient condition a 3-dimensional quasi-para-Sasakian manifold to be locally $\varphi-$symmetric.
We obtain some interesting results about a 3-dimensional quasi-para-Sasakian manifolds with $\eta-$parallel Ricci tensor.
We give an example for a 3-dimensional quasi-para-Sasakian manifold with a scalar curvature equal to $-6$.
\section{Preliminaries}\label{sec-2}
A (2n+1)-dimensional smooth manifold $M^{(2n+1)}$
has an \emph{almost paracontact structure}
$(\varphi,\xi,\eta)$
if it admits a tensor field
$\varphi$ of type $(1,1)$, a vector field $\xi$ and a 1-form
$\eta$ satisfying the  following compatibility conditions 
\begin{eqnarray}
  \label{f82}
    & &
    \begin{array}{cl}
     (i)   & \varphi(\xi)=0,\quad \eta \circ \varphi=0,\quad
     \\[5pt]
     (ii)  & \eta (\xi)=1 \quad \varphi^2 = id - \eta \otimes \xi,
     \\[5pt]
     (iii) & \textrm{distribution $\mathbb {D}: p \in M \longrightarrow \mathbb {D}_p\subset T_pM:$}
     \\[1pt]
     & \textrm{$\mathbb D_p=Ker \eta=\{X\in T_pM: \eta (X)=0\}$ is called {\it paracontact}}
     \\[1pt]
     & \textrm{{\it distribution} generated by $\eta$.}
    \end{array}
\end{eqnarray}

The tensor field $\varphi $ induces an almost paracomplex structure \cite{KW} on each
fibre on $\mathbb D$ and $(\mathbb D, \varphi , g_{\vert \mathbb D})$ is a $2n$-dimensional
almost paracomplex manifold. Since $g$ is non-degenerate metric on $M$ and $\xi $ is non-isotropic,
the paracontact distribution $\mathbb D$ is non-degenerate.

An immediate consequence of the definition of the almost
paracontact structure is that the endomorphism $\varphi$ has rank
$2n$,
$\varphi \xi=0$ and $\eta \circ \varphi=0$, (see \cite{B1,B2}
for the almost contact case).


If a manifold $M^{(2n+1)}$ with $(\varphi,\xi,\eta)$-structure
admits a pseudo-Riemannian metric $g$ such that
\begin{equation}\label{con}
g(\varphi X,\varphi Y)=-g(X,Y)+\eta (X)\eta (Y),
\end{equation}
then we say that $M^{(2n+1)}$ has an almost paracontact metric structure and
$g$ is called \emph{compatible}. Any compatible metric $g$ with a given almost paracontact
structure is necessarily of signature $(n+1,n)$.

Note that setting $Y=\xi$, we have
$\eta(X)=g(X,\xi).$

Further, any almost paracontact structure admits a compatible metric.
\begin{defn}
If $g(X,\varphi Y)=d\eta(X,Y)$ (where
$d\eta(X,Y)=\frac12(X\eta(Y)-Y\eta(X)-\eta([X,Y])$ then $\eta$ is
a paracontact form and the almost paracontact metric manifold
$(M,\varphi,\eta,\xi,g)$ is said to be a $\emph{paracontact metric
manifold}$.
\end{defn}
A paracontact metric manifold for which $\xi$ is Killing is called a $K-\emph{paracontact}$ $\emph{manifold}$. A paracontact structure on $M^{(2n+1)}$ naturally gives rise to an almost paracomplex structure on the product $M^{(2n+1)}\times\Re$. If this almost paracomplex structure is integrable, then the given paracontact metric manifold is said to be a $\emph{para-Sasakian manifold}$. Equivalently, (see \cite{Z}) a paracontact metric manifold is a para-Sasakian manifold if and only if
\begin{equation}\label{8}
(\nabla_{X}\varphi)Y=-g(X,Y)\xi+\eta(Y)X,
\end{equation}
 for all vector fields $X$ and $Y$ (where
$\nabla$ is the Livi-Civita connection of $g$).
\begin{defn}\label{d1}
If $(\nabla_X\varphi)Y=g(X,Y)\xi-\eta(Y)X,$ then the manifold
$(M,\varphi,\eta,\xi,g)$ is said to be a $\emph{quasi-para-Sasakian manifold}$.
\end{defn}
From $Definition~\ref{d1}$ (see \cite{JW}) we have
\begin{equation}\label{z1}
\nabla_{X}\xi=\varphi X.
\end{equation}

In \cite{ZN}, it is proved that $(M,\varphi,\eta,\xi,g)$ is normal, since $\xi$ is a Killing vector field and the manifold is not para-Sasakian. Thus we have
\begin{pro}
Let $(M,\varphi,\eta,\xi,g)$ be a quasi-para-Sasakian manifold. Then $(M,\varphi,\eta,\xi,g)$ is normal but not para-Sasakian.
\end{pro}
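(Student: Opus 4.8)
The plan is to prove the two assertions separately, deriving both directly from the structure equation of Definition~\ref{d1}. For the statement that $M$ is \emph{not} para-Sasakian, I would compare the defining identity $(\nabla_X\varphi)Y=g(X,Y)\xi-\eta(Y)X$ with the para-Sasakian identity \eqref{8}, whose right-hand side is its exact negative. If both held at once, equating the two right-hand sides would force $g(X,Y)\xi-\eta(Y)X=0$ for all $X,Y$. Choosing a vector $X$ in the paracontact distribution $\mathbb D$ with $g(X,X)\neq0$—which exists because $g|_{\mathbb D}$ is non-degenerate of signature $(n,n)$—and putting $Y=X$ gives $g(X,X)\xi=0$, a contradiction. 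Equivalently, \eqref{8} forces $\nabla_X\xi=-\varphi X$, whereas \eqref{z1} gives $\nabla_X\xi=\varphi X$; the two agree only if $\varphi\equiv0$, contradicting $\operatorname{rank}\varphi=2n$.

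For normality I would compute the Nijenhuis torsion $[\varphi,\varphi]$ of $\varphi$ from the structure equation. Because $\nabla$ is torsion-free, it may be written as
\begin{equation*}
[\varphi,\varphi](X,Y)=(\nabla_{\varphi X}\varphi)Y-(\nabla_{\varphi Y}\varphi)X-\varphi\big((\nabla_X\varphi)Y\big)+\varphi\big((\nabla_Y\varphi)X\big).
\end{equation*}
Inserting $(\nabla_X\varphi)Y=g(X,Y)\xi-\eta(Y)X$ and using $\varphi\xi=0$, the terms proportional to $\varphi X$ and $\varphi Y$ cancel in pairs, leaving $[\varphi,\varphi](X,Y)=\big(g(\varphi X,Y)-g(\varphi Y,X)\big)\xi$. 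A one-line consequence of \eqref{con}—replace $X$ by $\varphi X$, use $\varphi^2=\operatorname{id}-\eta\otimes\xi$ and $\eta\circ\varphi=0$—is that $\varphi$ is $g$-skew, $g(\varphi X,Y)=-g(X,\varphi Y)$, so this collapses to $[\varphi,\varphi](X,Y)=2g(\varphi X,Y)\xi$.

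It then remains to recognise the right-hand side as $2\,d\eta\otimes\xi$. Writing $\eta(\cdot)=g(\cdot,\xi)$ and using \eqref{z1}, I would expand $d\eta(X,Y)=\tfrac12\big(X\eta(Y)-Y\eta(X)-\eta([X,Y])\big)$; the $\nabla_XY$ and $\nabla_YX$ contributions cancel against $\eta([X,Y])$, and what survives is $\tfrac12\big(g(Y,\varphi X)-g(X,\varphi Y)\big)=g(\varphi X,Y)$, again by skew-symmetry. Hence $[\varphi,\varphi]=2\,d\eta\otimes\xi$, i.e. the normality tensor $N^{(1)}:=[\varphi,\varphi]-2\,d\eta\otimes\xi$ vanishes, which is precisely the normality condition; the vanishing of $N^{(1)}$ then forces that of the remaining three normality tensors, the relevant one being controlled by the fact that $\xi$ is Killing, $\mathcal L_\xi g(X,Y)=g(\varphi X,Y)+g(X,\varphi Y)=0$. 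I expect the one delicate point to be the bookkeeping of signs rather than any single calculation: the structure equation yields $\nabla_X\xi=+\varphi X$ in the quasi-para-Sasakian case but $-\varphi X$ in the para-Sasakian case, and this flips the sign of $d\eta$ relative to $g(\varphi\cdot\,,\cdot)$, so that both classes satisfy the single condition $N^{(1)}=0$ even though their defining identities carry opposite signs.
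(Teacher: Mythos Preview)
Your argument is correct and self-contained. The Nijenhuis computation, the verification that $\varphi$ is $g$-skew from \eqref{con}, and the identification $d\eta(X,Y)=g(\varphi X,Y)$ (equivalently \eqref{e4.1}) all check out, so $[\varphi,\varphi]-2\,d\eta\otimes\xi=0$ as you claim; the ``not para-Sasakian'' part via the sign clash between \eqref{8} and \eqref{z1} is likewise unobjectionable.

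The paper, however, does not prove this proposition at all: it simply records that normality was established in \cite{ZN} (invoking that $\xi$ is Killing) and states the result. So your route is genuinely different---a direct tensor calculation in place of a citation. What your approach buys is independence from the external reference and an explicit reason why the sign change in Definition~\ref{d1} relative to \eqref{8} does not spoil normality: both sign choices make the $\varphi X,\varphi Y$ terms cancel in $[\varphi,\varphi]$, while the sign of $d\eta$ flips in tandem with $\nabla\xi=\pm\varphi$, so $N^{(1)}$ vanishes in either case. The paper's approach buys brevity. One small remark: since the paper never writes down the normality tensor, you should state explicitly that in the paracontact setting the convention is $N^{(1)}=[\varphi,\varphi]-2\,d\eta\otimes\xi$ (as in \cite{Z}), because a reader used to the contact convention with the opposite sign could be momentarily confused.
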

Denoting by $\pounds$ the Lie differentiation of $g$, we see
\begin{pro}\label{pro2}
Let $(M,\varphi,\eta,\xi,g)$ be a quasi-para-Sasakian manifold. Then we have
\begin{equation}\label{e1}
(\nabla_X\eta)Y=-g(X,\varphi Y),
\end{equation}
\begin{equation}\label{e2}
\pounds_{\xi}g=0,
\end{equation}
\begin{equation}\label{e3}
\pounds_{\xi}\varphi=0,
\end{equation}
\begin{equation}\label{e4}
\pounds_{\xi}\eta=0,
\end{equation}
\begin{equation}\label{e4.1}
d\eta(X,Y)=-g(X,\varphi Y),
\end{equation}
where $X,Y\in T_pM.$
\end{pro}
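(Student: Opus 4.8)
The plan is to derive all five identities from the defining relation of Definition~\ref{d1} and its immediate consequence \eqref{z1}, using the metric compatibility of $\nabla$ together with the algebraic relations \eqref{f82} and \eqref{con}. The one preliminary fact I would record first is that $\varphi$ is skew-symmetric with respect to $g$, that is $g(\varphi X,Y)=-g(X,\varphi Y)$. To see this, replace $X$ by $\varphi X$ in \eqref{con}; since $\eta\circ\varphi=0$ this gives $g(\varphi^2X,\varphi Y)=-g(\varphi X,Y)$, while substituting $\varphi^2X=X-\eta(X)\xi$ and using $g(\xi,\varphi Y)=\eta(\varphi Y)=0$ gives $g(\varphi^2X,\varphi Y)=g(X,\varphi Y)$. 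Comparing the two expressions yields the skew-symmetry, which is the workhorse for everything that follows.

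For \eqref{e1} I would use $\eta(Y)=g(Y,\xi)$ and compute $(\nabla_X\eta)Y=X\,g(Y,\xi)-g(\nabla_XY,\xi)=g(Y,\nabla_X\xi)=g(Y,\varphi X)$ by \eqref{z1}, then rewrite this as $-g(X,\varphi Y)$ using skew-symmetry. Equation \eqref{e4.1} is then immediate from the torsion-free identity $d\eta(X,Y)=\tfrac12[(\nabla_X\eta)Y-(\nabla_Y\eta)X]$: inserting \eqref{e1} and applying skew-symmetry once more collapses the bracket to $-2g(X,\varphi Y)$, giving $d\eta(X,Y)=-g(X,\varphi Y)$.

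For \eqref{e2} I would invoke the standard identity $(\pounds_\xi g)(X,Y)=g(\nabla_X\xi,Y)+g(\nabla_Y\xi,X)$, which by \eqref{z1} equals $g(\varphi X,Y)+g(\varphi Y,X)$; this vanishes by skew-symmetry, so $\xi$ is Killing. For \eqref{e4} the cleanest route is Cartan's formula: since $\eta(\xi)=1$ is constant, $\pounds_\xi\eta=d(\eta(\xi))+\iota_\xi d\eta=\iota_\xi d\eta$, and $(\iota_\xi d\eta)(X)=d\eta(\xi,X)=-g(\xi,\varphi X)=-\eta(\varphi X)=0$ by \eqref{e4.1} and $\eta\circ\varphi=0$.

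The only genuinely delicate identity is \eqref{e3}. Using torsion-freeness I would write $(\pounds_\xi\varphi)X=\nabla_\xi(\varphi X)-\nabla_{\varphi X}\xi-\varphi(\nabla_\xi X)+\varphi(\nabla_X\xi)$ and then substitute $\nabla_\xi(\varphi X)=(\nabla_\xi\varphi)X+\varphi(\nabla_\xi X)$, so that the two $\varphi(\nabla_\xi X)$ terms cancel and leave $(\pounds_\xi\varphi)X=(\nabla_\xi\varphi)X-\nabla_{\varphi X}\xi+\varphi(\nabla_X\xi)$. Three substitutions then finish the computation: the defining relation with first argument $\xi$ gives $(\nabla_\xi\varphi)X=g(\xi,X)\xi-\eta(X)\xi=0$, while \eqref{z1} together with $\varphi^2=id-\eta\otimes\xi$ gives $\nabla_{\varphi X}\xi=\varphi^2X$ and $\varphi(\nabla_X\xi)=\varphi^2X$, so the last two terms cancel as well. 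I expect this bookkeeping—grouping the covariant derivatives so that all terms cancel in pairs—to be the main point, the remaining four identities being essentially direct consequences of the skew-symmetry of $\varphi$ and \eqref{z1}.
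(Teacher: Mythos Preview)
Your proof is correct. The paper omits the argument entirely, declaring it a routine calculation, and what you have written is precisely the natural route: the skew-symmetry of $\varphi$ with respect to $g$ together with \eqref{z1} dispatches \eqref{e1}, \eqref{e2}, \eqref{e4} and \eqref{e4.1} almost immediately, and your bookkeeping for \eqref{e3} via $(\pounds_\xi\varphi)X=(\nabla_\xi\varphi)X-\nabla_{\varphi X}\xi+\varphi(\nabla_X\xi)$ is clean and accurate.
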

Since the proof of $Proposition~\ref{pro2}$ follows by routine calculation, we shall omit it.

Denoting by $R$ the curvature tensor of $\nabla$, we have the following
\begin{defn}
An almost paracontact structure $(\varphi,\xi,\eta,g)$ is said to be $\emph{locally}$ $\emph{symmetric}$ if $(\nabla_WR)(X,Y,Z)=0$, for all vector
fields $W,X,Y,Z\in T_pM$ .
\end{defn}
\begin{defn}
An almost paracontact structure $(\varphi,\xi,\eta,g)$ is said to be $\emph{locally $\varphi-$}$ $\emph{symmetric}$ if $\varphi^2(\nabla_WR)(X,Y,Z)=0$, for all vector
fields $W,X,Y,Z$ orthogonal to $\xi$.
\end{defn}

Finally, the sectional curvature $K(\xi,X)=\epsilon_XR(X,\xi,\xi,X)$, where $|X|=\epsilon_X=\pm 1$, of a plane section spanned by $\xi$ and the vector $X$ orthogonal to $\xi$ is called $\emph{$\xi$-sectional curvature}$, whereas the sectional curvature $K(X,\varphi X)=-R(X,\varphi X,\varphi X,X)$, where $|X|=-|\varphi X|=\pm 1$,
of a plane section spanned by vectors $X$ and $\varphi X$ orthogonal to $\xi$ is called a $\emph{$\varphi$-para-holomorphic sectional curvature}$.
\section{Some properties of quasi-para-Sasakian manifolds}\label{sec-3}
The following result is well-known from the theory of para-Sasakian manifolds: $K(X,\xi)=-1$ and if a para-Sasakian manifold is locally symmetric, then it is of constant negative curvature $-1$(\cite{Z1}).
For quasi-para-Sasakian manifolds we get
\begin{pro}\label{pro3}
Let $(M,\varphi,\eta,\xi,g)$ be a quasi-para-Sasakian manifold. Then we have
\begin{equation}\label{e5}
R(X,Y)\xi=\eta(X)Y-\eta(Y)X,
\end{equation}
\begin{equation}\label{e5.1}
R(X,\xi)Y=g(X,Y)\xi-\eta(Y)X,
\end{equation}
\begin{equation}\label{e5.2}
Ric(X,\xi)=-2n\eta(X),
\end{equation}
\begin{equation}\label{e6}
K(X,\xi)=-1,
\end{equation}
\begin{equation}\label{e7}
(\nabla_ZR)(X,Y,\xi)=-R(X,Y)\varphi Z+g(X,\varphi Z)Y-g(Y,\varphi Z)X,
\end{equation}
where $Ric$ is the Ricci tensor and $X,Y,Z\in T_pM.$
\end{pro}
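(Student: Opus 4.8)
The plan is to establish the five identities in order, feeding each into the next and using only the two structural facts already available: the formula $(\nabla_X\varphi)Y=g(X,Y)\xi-\eta(Y)X$ of Definition~\ref{d1} and its consequence $\nabla_X\xi=\varphi X$ in \eqref{z1}.

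I would obtain \eqref{e5} by expanding $R(X,Y)\xi=\nabla_X\nabla_Y\xi-\nabla_Y\nabla_X\xi-\nabla_{[X,Y]}\xi$ directly. Substituting $\nabla_Y\xi=\varphi Y$ and writing $\nabla_X(\varphi Y)=(\nabla_X\varphi)Y+\varphi(\nabla_X Y)$, the symmetric contribution $g(X,Y)\xi$ coming from Definition~\ref{d1} cancels against its $X\leftrightarrow Y$ partner, and the three terms in which $\varphi$ is applied to $\nabla_X Y$, $\nabla_Y X$ and $[X,Y]=\nabla_X Y-\nabla_Y X$ cancel identically; the surviving terms are precisely $\eta(X)Y-\eta(Y)X$.

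The next three identities require no further differentiation. For \eqref{e5.1} I would use only \eqref{e5} and the pair symmetry $g(R(A,B)C,D)=g(R(C,D)A,B)$: from $g(R(X,\xi)Y,Z)=-g(R(\xi,X)Y,Z)=-g(R(Y,Z)\xi,X)$ and \eqref{e5} one recovers $g\bigl(g(X,Y)\xi-\eta(Y)X,Z\bigr)$, which is \eqref{e5.1}. Equation \eqref{e5.2} is the trace of \eqref{e5} over a pseudo-orthonormal frame: the map $W\mapsto\eta(W)X-\eta(X)W$ has trace $\eta(X)-(2n+1)\eta(X)=-2n\eta(X)$. Finally, setting $Y=\xi$ in \eqref{e5.1} gives $R(X,\xi)\xi=\eta(X)\xi-X$, which equals $-X$ when $X\perp\xi$, so that $K(X,\xi)=\epsilon_X\,g(-X,X)=-\epsilon_X^2=-1$, establishing \eqref{e6}.

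The identity \eqref{e7} is the only one needing genuine differentiation, and this is where I expect the bookkeeping to be the main difficulty. I would start from the derivation rule
\[
(\nabla_Z R)(X,Y)\xi=\nabla_Z\bigl(R(X,Y)\xi\bigr)-R(\nabla_Z X,Y)\xi-R(X,\nabla_Z Y)\xi-R(X,Y)\nabla_Z\xi,
\]
insert \eqref{e5} in the first three terms and $\nabla_Z\xi=\varphi Z$ in the last, and resolve the derivatives of $\eta$ produced by $\nabla_Z(\eta(X)Y-\eta(Y)X)$ through $(\nabla_Z\eta)X=-g(Z,\varphi X)$ of \eqref{e1}. The terms carrying $\eta(\nabla_Z X)$, $\eta(\nabla_Z Y)$, $\eta(X)\nabla_Z Y$ and $\eta(Y)\nabla_Z X$ must be shown to cancel against the two corrector terms $R(\nabla_Z X,Y)\xi$ and $R(X,\nabla_Z Y)\xi$, leaving the residue $-g(Z,\varphi X)Y+g(Z,\varphi Y)X-R(X,Y)\varphi Z$. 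A last application of the skew-symmetry $g(\varphi A,B)=-g(A,\varphi B)$, which follows from \eqref{e4.1} and the antisymmetry of $d\eta$, turns $-g(Z,\varphi X)$ into $g(X,\varphi Z)$ and $g(Z,\varphi Y)$ into $-g(Y,\varphi Z)$, producing \eqref{e7} exactly. The obstacle here is purely combinatorial: one must verify that every connection derivative $\nabla_Z X$ and $\nabla_Z Y$ introduced by the product rule is matched and removed, so that the final answer is manifestly tensorial.
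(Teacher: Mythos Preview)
Your proposal is correct and follows essentially the same route as the paper: the paper derives \eqref{e5} from \eqref{z1}, \eqref{e1} and the definition of $R$, states that \eqref{e5.1}--\eqref{e6} are consequences of \eqref{e5}, and obtains \eqref{e7} by exactly the derivation-rule expansion you describe. Your treatment simply supplies the details the paper omits; the cancellation of the $\nabla_Z X$, $\nabla_Z Y$ terms and the final sign flip via $g(\varphi A,B)=-g(A,\varphi B)$ are correct as written.
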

\begin{proof}
The equation \eqref{e5} follows directly from \eqref{z1}, \eqref{e1} and the definition of the curvature $R$. The equations  \eqref{e5.1}, \eqref{e5.2}  and \eqref{e6} are a consequence of \eqref{e5}. By virtue of \eqref{z1}
\eqref{e1} and \eqref{e5} we get
\eqref{e7}:
$$(\nabla_ZR)(X,Y,\xi)=Z(R(X,Y)\xi)-R(\nabla_ZX,Y)\xi-R(X,\nabla_ZY)\xi-R(X,Y)\nabla_Z\xi=$$
$$=-R(X,Y)\varphi Z+g(X,\varphi Z)Y-g(Y,\varphi Z)X.$$
\end{proof}
\begin{co}\label{co1}
If $(M,\varphi,\eta,\xi,g)$ is locally symmetric, then it is of constant negative curvature $-1$.
\end{co}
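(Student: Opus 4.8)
The plan is to let the local-symmetry hypothesis act directly on equation \eqref{e7}. Local symmetry means $(\nabla_W R) = 0$ identically, so in particular $(\nabla_Z R)(X,Y,\xi) = 0$ for every $Z$. Substituting this into \eqref{e7} at once kills the left-hand side and leaves the pointwise identity
$$R(X,Y)\varphi Z = g(X,\varphi Z)Y - g(Y,\varphi Z)X$$
valid for all $X,Y,Z \in T_pM$.

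The second step is to strip the $\varphi$ off the argument. Since $\eta\circ\varphi = 0$ we have $\varphi Z \in \mathbb{D} = \Ker\eta$ always, and since $\varphi^2 = \id - \eta\otimes\xi$ any $W \in \mathbb{D}$ satisfies $W = \varphi(\varphi W)$; hence as $Z$ ranges over $T_pM$ the vector $\varphi Z$ sweeps out all of $\mathbb{D}$. Writing $W = \varphi Z$ therefore gives
$$R(X,Y)W = g(X,W)Y - g(Y,W)X \qquad \text{for every } W \in \mathbb{D}.$$
To extend this to all of $T_pM$ I would treat the $\xi$-direction by hand: rewriting \eqref{e5} with $\eta(X) = g(X,\xi)$ and $\eta(Y) = g(Y,\xi)$ shows $R(X,Y)\xi = g(X,\xi)Y - g(Y,\xi)X$, which is exactly the same formula for $W = \xi$. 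Because $T_pM = \mathbb{D}\oplus\RR\xi$ and both sides are $\RR$-linear in $W$, the identity
$$R(X,Y)W = g(X,W)Y - g(Y,W)X$$
now holds for every $W \in T_pM$.

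This is precisely the curvature tensor of a space of constant sectional curvature, and the normalization is fixed by \eqref{e6}: a one-line check (for $X$ orthogonal to $\xi$ one finds $R(X,\xi)\xi = -X$, so $R(X,\xi,\xi,X) = -\epsilon_X$ and $K(X,\xi) = -1$) identifies the constant as $-1$, completing the proof. All the manipulations are routine; the only step demanding a little care is the surjectivity of $\varphi$ onto $\mathbb{D}$ together with the splitting $T_pM = \mathbb{D}\oplus\RR\xi$, which is what lets me assemble the full constant-curvature formula from its value on the distribution and on $\xi$. I expect no real obstacle, since \eqref{e7} already carries the analytic weight and local symmetry merely annihilates its left-hand side.
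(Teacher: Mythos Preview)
Your argument is correct and is precisely the intended expansion of the paper's one-line proof, which simply asserts that the corollary ``follows from \eqref{e7}''. You have supplied the details---setting the left-hand side of \eqref{e7} to zero, using the surjectivity of $\varphi$ onto $\mathbb{D}$ together with \eqref{e5} for the $\xi$-direction, and reading off the constant $-1$---that the paper leaves implicit.
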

\begin{proof}
$Corollary~\ref{co1}$ follows from \eqref{e7}.
\end{proof}

\section{$\eta-$Einstein manifolds}\label{sec-4}
An almost paracontact pseudo-Riemannian manifold is called $\eta-\emph{Einstein}$, if the Ricci tensor $Ric$ satisfies $Ric=a.g+b.\eta\otimes\eta,$ where $a,b$ are smooth scalar functions on $M$. If a para-Sasakian manifold is $\eta-$Einstein and $n>1$, then $a$ and $b$ are constant (see \cite{Z}).
\begin{pro}
Let $(M,\varphi,\eta,\xi,g)$ be a quasi-para-Sasakian manifold. If $M$ is an $\eta-$Einstein manifold, we have
\begin{equation}\label{e8}
a+b=-2n,
\end{equation}
\begin{equation}\label{e9}
a=constant\quad and \quad b=constant,\quad n>1.
\end{equation}
\end{pro}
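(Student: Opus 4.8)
The plan is to read off \eqref{e8} by evaluating the $\eta$-Einstein identity along $\xi$, and then to derive \eqref{e9} from the contracted second Bianchi identity. For \eqref{e8} I would substitute $X=Y=\xi$ into $Ric=a\,g+b\,\eta\otimes\eta$. Since $\eta(\xi)=g(\xi,\xi)=1$, the right-hand side collapses to $a+b$, whereas the left-hand side equals $Ric(\xi,\xi)=-2n$ by \eqref{e5.2}. This gives $a+b=-2n$ at once.

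For \eqref{e9} the first step is to compute the scalar curvature. Tracing $Ric=a\,g+b\,\eta\otimes\eta$ over an orthonormal frame $\{e_i\}$ with $g(e_i,e_i)=\epsilon_i$, and using $\sum_i\epsilon_i\eta(e_i)^2=g(\xi,\xi)=1$, yields $r=(2n+1)a+b$, which by \eqref{e8} simplifies to $r=2n(a-1)$; in particular $X(r)=2n\,X(a)$ for every $X$. The second step is to differentiate the Ricci tensor. Using \eqref{e1} in the form $(\nabla_X\eta)Y=-g(X,\varphi Y)$ one obtains
\begin{equation*}
(\nabla_X Ric)(Y,Z)=(Xa)\,g(Y,Z)+(Xb)\,\eta(Y)\eta(Z)-b\bigl[g(X,\varphi Y)\eta(Z)+\eta(Y)g(X,\varphi Z)\bigr].
\end{equation*}

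Next I would invoke the contracted second Bianchi identity $\sum_i\epsilon_i(\nabla_{e_i}Ric)(X,e_i)=\tfrac12 X(r)$. Summing the expression above against the frame, the frame expansion $\sum_i\epsilon_i\eta(e_i)e_i=\xi$ together with $\eta\circ\varphi=0$ makes the first $\varphi$-term drop out, since $g(\xi,\varphi X)=\eta(\varphi X)=0$, while the second $\varphi$-term vanishes because $\tr\varphi=0$---indeed $\varphi$ annihilates $\xi$ and has eigenvalues $\pm1$ with equal multiplicities on $\mathbb{D}$. What remains is $X(a)+\eta(X)\,\xi(b)$, so the identity becomes $\tfrac12 X(r)=X(a)+\eta(X)\,\xi(b)$.

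Combining this with $X(r)=2n\,X(a)$ gives $(n-1)X(a)=\eta(X)\,\xi(b)$ for all $X$. I would then split into cases: for $X$ orthogonal to $\xi$ the right-hand side vanishes, so $(n-1)X(a)=0$ and hence $X(a)=0$ on $\mathbb{D}$ whenever $n>1$; for $X=\xi$ one gets $(n-1)\xi(a)=\xi(b)$, and since \eqref{e8} forces $\xi(b)=-\xi(a)$ this reduces to $n\,\xi(a)=0$, so $\xi(a)=0$ as well. Therefore $da=0$, i.e.\ $a$ is constant, and then $b=-2n-a$ is constant too. The step needing the most care is the contraction itself: one must verify that the indefinite signature does not disturb the contracted Bianchi identity and that the two $\varphi$-trace terms genuinely cancel. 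The role of the hypothesis $n>1$ is also sharply visible here, since it is exactly the coefficient $n-1$ that must be inverted to conclude constancy of $a$.
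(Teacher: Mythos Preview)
Your proof is correct and follows essentially the same route as the paper: derive $a+b=-2n$ from $Ric(\cdot,\xi)=-2n\eta$, compute $scal=2n(a-1)$, differentiate the $\eta$-Einstein relation using \eqref{e1}, and feed the result into the contracted second Bianchi identity to force $(n-1)X(a)=0$. The only organizational difference is that the paper first isolates $\xi(a)=\xi(b)=0$ and then obtains $Y(scal)=2Y(a)$ for all $Y$, whereas you carry the term $\eta(X)\xi(b)$ through and eliminate it at the end via $a+b=-2n$; the two arguments are equivalent.
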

\begin{proof}
The equation \eqref{e8} follows from $Ric(X,\xi)=-2n\eta(X)$ which is derived from \eqref{e5}. As $M$ is an $\eta-$Einstein manifold, the scalar curvature $scal$ is equal to $2n(a-1)$. We define the Ricci operator $Q$ as follows:
$g(QX,Y)=Ric(X,Y)$. By the identity $Y(scal)=2nY(a)$ and the trace of the map $[X\rightarrow (\nabla_XQ)Y]$, we have $\xi(scal)=0,\quad \xi(a)=\xi(b)=0$. From the identity
$$(\nabla_ZRic)(X,Y)=Z(a)g(X,Y)+Z(b)\eta(X)\eta(Y)-b\eta(Y)g(Z,\varphi X)-b\eta(X)g(Z,\varphi Y)$$
we have $Y(scal)=2Y(a)$, but $Y(scal)=2nY(a)$ and therefore $2(n-1)Y(a)=0$. For $n>1$ we obtain $a=constant$ and $b=constant$.
\end{proof}

\section{Curvature tensor}\label{sec-5}
Now we prove the following
\begin{pro}\label{pro4}
Let $(M,\varphi,\eta,\xi,g)$ be a quasi-para-Sasakian manifold. Then we have the following identities
\begin{equation}\label{e10}
R(X,Y)\varphi Z-\varphi R(X,Y)Z=g(Y,Z)\varphi X-g(X,Z)\varphi Y-g(Y,\varphi Z)X+,
\end{equation}
$$+g(X,\varphi Z)Y,$$
\begin{equation}\label{e11}
R(\varphi X,\varphi Y)Z=-R(X,Y)Z-g(Y,Z)X+g(X,Z)Y+
\end{equation}
$$+g(Y,\varphi Z)\varphi X-g(X,\varphi Z)\varphi Y,$$
where $X,Y,Z\in T_pM.$
\end{pro}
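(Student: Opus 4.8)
The plan is to derive \eqref{e10} from the Ricci commutation identity applied to the tensor field $\varphi$, and then to obtain \eqref{e11} from \eqref{e10} by exploiting the symmetries of $R$ together with the skew-symmetry of $\varphi$.

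For \eqref{e10} I would first compute the second covariant derivative of $\varphi$. Recall the commutation rule
$$(\nabla^2_{X,Y}\varphi)Z-(\nabla^2_{Y,X}\varphi)Z=R(X,Y)\varphi Z-\varphi R(X,Y)Z,$$
where $\nabla^2_{X,Y}=\nabla_X\nabla_Y-\nabla_{\nabla_XY}$ denotes the second covariant derivative. Starting from the defining relation $(\nabla_Y\varphi)Z=g(Y,Z)\xi-\eta(Z)Y$ of Definition~\ref{d1} and differentiating, I would use $\nabla_X\xi=\varphi X$ from \eqref{z1} and $(\nabla_X\eta)Z=-g(X,\varphi Z)$ from \eqref{e1}. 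The terms carrying $\xi$ collapse by metric compatibility of $\nabla$, while the $\nabla_XY$ terms cancel in pairs, leaving
$$(\nabla^2_{X,Y}\varphi)Z=g(Y,Z)\varphi X+g(X,\varphi Z)Y.$$
Antisymmetrising in $X,Y$ and inserting this into the commutation rule gives \eqref{e10} at once.

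For \eqref{e11} I would argue by transposition. First, \eqref{con} together with $\eta\circ\varphi=0$ gives $g(\varphi X,Y)=-g(X,\varphi Y)$, so $\varphi$ is skew-symmetric for $g$. Then, using the pair symmetry $g(R(\varphi X,\varphi Y)Z,W)=g(R(Z,W)\varphi X,\varphi Y)$, moving one $\varphi$ across by skewness, and applying \eqref{e10} to commute $\varphi$ through $R(Z,W)$, together with $\varphi^2=\mathrm{id}-\eta\otimes\xi$ and $R(Z,W)\xi=\eta(Z)W-\eta(W)Z$ from \eqref{e5}, I would bring the expression to the form $-g(R(X,Y)Z,W)$ plus purely metric and $\varphi$ terms, the curvature term being re-identified via the pair symmetry $g(R(Z,W)X,Y)=g(R(X,Y)Z,W)$. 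Since the test vector $W$ is arbitrary, equating coefficients yields \eqref{e11}.

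The first identity is essentially a bookkeeping exercise and poses no real difficulty. The genuine obstacle lies in \eqref{e11}: one must keep careful track of the correction terms generated by $\varphi^2=\mathrm{id}-\eta\otimes\xi$ and of the sign changes forced by the skew-symmetry of $\varphi$, and verify that all $\eta\otimes\eta$ contributions cancel so that only the stated metric and $\varphi$ terms survive.
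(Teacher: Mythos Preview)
Your proposal is correct and follows essentially the same route as the paper: identity \eqref{e10} via the Ricci commutation formula for $\varphi$ together with \eqref{z1} and \eqref{e1}, and identity \eqref{e11} by pairing \eqref{e10} with a test vector, using the skew-symmetry of $\varphi$ (equivalently \eqref{con}), the relation $\eta(R(X,Y)Z)=-\eta(X)g(Y,Z)+\eta(Y)g(X,Z)$ coming from \eqref{e5}, and the pair symmetry of $R$. The only cosmetic difference is the order of operations: the paper contracts \eqref{e10} against $\varphi W$ and then invokes pair symmetry, whereas you apply pair symmetry first and then move $\varphi$ across; the $\eta$-terms indeed cancel exactly as you anticipated.
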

\begin{proof}
The equation \eqref{e10} follows from the Ricci's identity:
$$\nabla_X\nabla_Y\varphi-\nabla_Y\nabla_X\varphi-\nabla_{[X,Y]}\varphi=R(X,Y)\varphi Z-\varphi R(X,Y)Z.$$
We verify \eqref{e11}: By \eqref{e10}, we have
$$R(X,Y,\varphi Z,\varphi W)-g(\varphi R(X,Y)Z,\varphi W)=$$
$$=g(Y,Z)g(\varphi X,\varphi W)-g(X,Z)g(\varphi Y,\varphi W)-$$
$$-g(Y,\varphi Z)g(X,\varphi W)+g(X,\varphi Z)g(Y,\varphi W).$$
Using $\eta(R(X,Y)Z)=-\eta(X)g(Y,Z)+\eta(Y)g(X,Z)$, the above formula takes the form
$$R(\varphi Z,\varphi W,X,Y)=-R(Z,W,X,Y)-g(Y,Z)g(X,W)+g(X,Z)g(Y,W)-$$
$$-g(Y,\varphi Z)g(X,\varphi W)+g(X,\varphi Z)g(Y,\varphi W).$$
\end{proof}
As an application of $Proposition~\ref{pro4}$, we shall prove the following proposition.
\begin{pro}\label{pro5}
Let $(M,\varphi,\eta,\xi,g)$ be a quasi-para-Sasakian manifold of dimension greater than $3$. If $M$ is conformally flat, then $M$ is a space of constant negative curvature $-1$.
\end{pro}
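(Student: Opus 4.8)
The plan is to exploit the vanishing of the Weyl conformal curvature tensor. Since $\dim M=2n+1>3$, conformal flatness is equivalent to $C=0$, where
\[
C(X,Y)Z=R(X,Y)Z-\frac{1}{2n-1}\bigl[Ric(Y,Z)X-Ric(X,Z)Y+g(Y,Z)QX-g(X,Z)QY\bigr]+\frac{scal}{2n(2n-1)}\bigl[g(Y,Z)X-g(X,Z)Y\bigr],
\]
and $Q$ is the Ricci operator $g(QX,Y)=Ric(X,Y)$. First I would substitute $Z=\xi$ and then $Y=\xi$ into $C=0$, using \eqref{e5}, \eqref{e5.2} together with $Q\xi=-2n\xi$, and solve the resulting equation for $QX$. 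This should yield $QX=aX+b\,\eta(X)\xi$ with $a=1+\tfrac{scal}{2n}$ and $b=-(2n+1)-\tfrac{scal}{2n}$; in other words $M$ is $\eta$-Einstein. By \eqref{e8}--\eqref{e9} and the hypothesis $n>1$, the functions $a$ and $b$, and hence $scal$, are then constant.

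Next I would feed this $\eta$-Einstein Ricci tensor back into $C=0$. Since $Ric=a\,g+b\,\eta\otimes\eta$ contains no $\varphi$, this expresses the curvature in the reduced form
\[
R(X,Y)Z=\lambda\bigl[g(Y,Z)X-g(X,Z)Y\bigr]+\mu\bigl\{\eta(Y)\eta(Z)X-\eta(X)\eta(Z)Y+\bigl[g(Y,Z)\eta(X)-g(X,Z)\eta(Y)\bigr]\xi\bigr\}
\]
for scalars $\lambda,\mu$ assembled from $a,b,scal$ (explicitly $\mu=b/(2n-1)$).

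The decisive step is to insert this expression into the universal identity \eqref{e11}. Recalling that $\eta\circ\varphi=0$ and that $\varphi$ is skew-adjoint, $g(\varphi X,Y)=-g(X,\varphi Y)$ (a consequence of \eqref{e4.1}), all the $\mu$-terms of $R(\varphi X,\varphi Y)Z$ drop out, leaving only the $\lambda$-part, which carries genuine $\varphi X,\varphi Y$ contributions. Matching these against the explicit $\varphi$-terms on the right-hand side of \eqref{e11} forces $\lambda=-1$, and then the vanishing of the remaining $g$- and $\eta$-terms forces $\mu=0$, i.e. $b=0$ and $a=-2n$. Consequently $R(X,Y)Z=-\bigl[g(Y,Z)X-g(X,Z)Y\bigr]$, so $M$ is of constant curvature $-1$.

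I expect the main obstacle to lie in this last step rather than in the bookkeeping: the $\eta$-Einstein condition by itself only yields the single linear relation $a+b=-2n$ and leaves one free parameter, so an extra structural input is indispensable. The key realization is that the purely algebraic curvature identity \eqref{e11}, valid on every quasi-para-Sasakian manifold, supplies exactly the missing constraint and pins the constants down to $\lambda=-1$, $\mu=0$.
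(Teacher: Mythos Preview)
Your proposal is correct and follows essentially the same route as the paper: derive the $\eta$-Einstein form of $Ric$ by plugging $\xi$ into the conformally-flat curvature expression, feed it back to get $R$ in the two-parameter form, and then use the curvature identity of Proposition~\ref{pro4} to kill the free parameter. The only cosmetic differences are that the paper invokes \eqref{e10} rather than \eqref{e11} (the two are equivalent, the latter being derived from the former) and evaluates on basis vectors to obtain $scal=-2n(2n+1)$ directly, while you phrase the same computation as matching $\varphi$-terms to get $\lambda=-1$; also, your appeal to \eqref{e9} for the constancy of $a,b$ is harmless but unnecessary, since the argument pins $scal$ to a specific numerical value anyway.
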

\begin{proof}
Since $M$ is conformally flat, the curvature tensor of $M$ is written as
\begin{equation}\label{e12}
R(X,Y)Z=\frac{1}{2n-1}(Ric(Y,Z)X-Ric(X,Z)Y+g(Y,Z)QX-g(X,Z)QY)-
\end{equation}
$$+\frac{scal}{2n(2n-1)}(g(X,Z)Y-g(Y,Z)X).$$
We calculate $R(\xi,Y)\xi$ using the previous formula. Using \eqref{e5} and
$$Ric(X,\xi)=-2n\eta(X),$$
we get
\begin{equation}\label{e13}
2nRic(Y,Z)=(scal+2n)g(Y,Z)-(scal+4n^2+2n)\eta(Y)\eta(Z).
\end{equation}
By virtue of \eqref{e10}, \eqref{e12} and \eqref{e13}, we have
\begin{equation}\label{e14}
(scal+4n^2+2n)(g(Y,\varphi Z)X-g(X,\varphi Z)Y+g(X,Z)\varphi Y-g(Y,Z)\varphi X+
\end{equation}
$$+g(X,\varphi Z)\eta(Y)\xi-g(Y,\varphi Z)\eta(X)\xi+\eta(Y)\eta(Z)\varphi X-\eta(X)\eta(Z)\varphi Y)=0.$$
Let $(e_1,...,e_n,\varphi e_1,...,\varphi e_n,\xi)$ be an orthonormal basis of $T_pM$. Setting $X=e_1,Y=e_2$ and $Z=\varphi e_2$ in \eqref{e14}, we see $scal=-2n(2n+1)$. Thus we have $Ric=-2ng$.
$Proposition~\ref{pro4}$ follows from \eqref{e12}.
\end{proof}

In a para-Sasakian manifold with constant $\varphi-$para-holomorphic sectional curvature, say $H$, the curvature tensor has a special feature (see \cite{Z1}):
The necessary and sufficient condition for a para-Sasakian manifold to have constant $\varphi-$para-holomorphic sectional curvature $H$ is
$$4R(X,Y)Z=(H-3)(g(Y,Z)X-g(X,Z)Y)+(H+1)(\eta(X)\eta(Z)Y-\eta(Y)\eta(Z)X+$$
$$+\eta(Y)g(X,Z)\xi-\eta(X)g(Y,Z)\xi+g(Y,\varphi Z)\varphi X-g(X,\varphi Z)\varphi Y+2g(\varphi X,Y)\varphi Z).$$
In our case we have
\begin{pro}\label{pro6}
Let $(M,\varphi,\eta,\xi,g)$ be a quasi-para-Sasakian manifold. The necessary and sufficient condition for $M$ to have constant $\varphi-$para-holomorphic sectional curvature $H$  is
\begin{equation}\label{e16}
4R(X,Y)Z=(H-3)(g(Y,Z)X-g(X,Z)Y)+(H+1)(\eta(X)\eta(Z)Y-\eta(Y)\eta(Z)X+
\end{equation}
$$+\eta(Y)g(X,Z)\xi-\eta(X)g(Y,Z)\xi+g(Y,\varphi Z)\varphi X-g(X,\varphi Z)\varphi Y+2g(\varphi X,Y)\varphi Z),$$
where $X,Y,Z\in T_pM.$
\end{pro}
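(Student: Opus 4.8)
The plan is to prove the two implications separately. Write $R_0$ for the $(1,3)$-tensor field obtained by dividing the right-hand side of \eqref{e16} by $4$, so that the asserted formula reads $R=R_0$. The sufficiency I would settle by a direct substitution, whereas the necessity I would reduce to an algebraic polarization lemma on the paracontact distribution $\mathbb{D}$.

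For the sufficiency, assume $R=R_0$ and fix $X$ orthogonal to $\xi$ with $|X|=\pm1$, so that $\eta(X)=0$, $\varphi^2X=X$, $g(\varphi X,\varphi X)=-g(X,X)$, and $g(X,\varphi X)=0$ by \eqref{e4.1}. Setting $Y=\varphi X$ and $Z=\varphi X$ in \eqref{e16}, every term carrying a factor $\eta(X)$, $\eta(Y)$ or $\eta(Z)$ drops out, and collecting the survivors yields
\begin{equation*}
4R(X,\varphi X)\varphi X=-4H\,g(X,X)\,X .
\end{equation*}
Hence $R(X,\varphi X,\varphi X,X)=-H\,g(X,X)^2=-H$, so $K(X,\varphi X)=-R(X,\varphi X,\varphi X,X)=H$, which is constant. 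This is the easy half and also shows that the model tensor $R_0$ itself has constant $\varphi$-para-holomorphic sectional curvature $H$.

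For the necessity, assume $K(X,\varphi X)=H$ for every unit $X\perp\xi$. Since $R(X,\varphi X,\varphi X,X)+H\,g(X,X)^2$ is polynomial in $X$ and vanishes on the open set of unit vectors of $\mathbb{D}_p$, it vanishes on all of $\mathbb{D}_p$. By the sufficiency step the same holds for $R_0$; moreover, setting $Z=\xi$ in \eqref{e16} gives $R_0(X,Y)\xi=\eta(X)Y-\eta(Y)X$, in agreement with \eqref{e5}, and $R_0$ shares with $R$ the algebraic curvature symmetries as well as the compatibility identity \eqref{e11}. Consequently the difference $T=R-R_0$ is a generalized curvature tensor satisfying (i) $T(X,Y)\xi=0$; (ii) $T(\varphi X,\varphi Y,Z,W)=-T(X,Y,Z,W)$ for $X,Y,Z,W\in\mathbb{D}_p$, the quadratic $g$-terms of \eqref{e11} being common to $R$ and $R_0$; and (iii) $T(X,\varphi X,\varphi X,X)=0$ for all $X\in\mathbb{D}_p$. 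I would then polarize (iii), replacing $X$ successively by $X+Y$ and $X+\varphi Y$ and extracting the multilinear components, and combine the resulting relations with the first Bianchi identity and the $\varphi$-invariance (ii) to conclude $T\equiv0$ on $\mathbb{D}_p$. Finally, since $TM=\mathbb{D}\oplus\Span{\xi}$ and property (i) together with the pair symmetry forces every component of $T$ having $\xi$ in any slot to vanish, multilinearity gives $T\equiv0$, i.e. $R=R_0$, which is \eqref{e16}.

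The genuinely delicate step is the polarization. Because the compatible metric has signature $(n+1,n)$, the distribution $\mathbb{D}_p$ contains null vectors, so one cannot reduce to unit vectors; the vanishing of the quartic $X\mapsto T(X,\varphi X,\varphi X,X)$ must instead be treated as a polynomial identity and then fully polarized. The sign bookkeeping also differs from the Hermitian (Sasakian) model, since $\varphi^2=\mathrm{id}$ rather than $-\mathrm{id}$ makes the $\varphi$-invariance (ii) carry a minus sign; the main obstacle is to verify that the polarized identities, fed through the Bianchi relation, supply exactly the relations needed to annihilate $T$ rather than only its symmetric part.
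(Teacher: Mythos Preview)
Your proposal is correct and follows essentially the same route as the paper: both arguments polarize the quartic identity on $\mathbb{D}$, combine the result with the $\varphi$-compatibility relations \eqref{e10}--\eqref{e11} and the first Bianchi identity to recover the full curvature on $\mathbb{D}$, and then extend to $T_pM$ via \eqref{e5}. Your repackaging through the difference tensor $T=R-R_{0}$ renders the auxiliary identities homogeneous and so trims some bookkeeping, but the paper carries out exactly this polarization explicitly in \eqref{e17}--\eqref{e24}, so the substance is the same.
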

\begin{proof}
For any vector fields $X,Y\in\mathbb D$, we have
\begin{equation}\label{e17}
R(X,\varphi X,X,\varphi X)=Hg^2(X,X)
\end{equation}
By identity \eqref{e10} we get
\begin{equation}\label{e17}
R(X,\varphi Y,X,\varphi Y)=R(X,\varphi Y,Y,\varphi X)-g^2(X,\varphi Y)+g^2(X,Y)-g(X,X)g(Y,Y),
\end{equation}
\begin{equation}\label{e18}
R(X,\varphi X,Y,\varphi X)=R(X,\varphi X,X,\varphi Y). \end{equation}
Substituting $X+Y$ in \eqref{e10} and using the  Bianchi identity, we obtain \begin{equation}\label{e19}
2R(X,\varphi X,X,\varphi Y)+2R(Y,\varphi Y,Y,\varphi X)+3R(X,\varphi Y,Y,\varphi X)-R(X,Y,X,Y)=
\end{equation}
$$=H(2g^2(X,Y)+g(X,X)g(Y,Y)+2g(X,Y)g(X,X)+2g(X,Y)g(Y,Y)).$$
Replacing $Y$ by $-Y$ in \eqref{e19} and summing it to \eqref{e19} we have
\begin{equation}\label{e20}
3R(X,\varphi Y,Y,\varphi X)-R(X,Y,X,Y)=H(2g^2(X,Y)+g(X,X)g(Y,Y)).
\end{equation}
Replacing $Y$ by $\varphi Y$ in \eqref{e20} and from identities \eqref{e21}, \eqref{e17} and \eqref{e20}, we get
\begin{equation}\label{e21}
4R(X,Y,X,Y)=(H-3)(g^2(X,Y)-g(X,X)g(Y,Y))+(H+1)g^2(X,\varphi Y).
\end{equation}
Let $X,Y,Z,W\in\mathbb D$. Calculating $R(X+Z,Y+W,X+Z,Y+W)$ and using \eqref{e21} we see
\begin{equation}\label{e22}
4R(X,Y,Z,W)+4R(X,W,Z,Y)=(H-3)(g(X,Y)g(Z,W)+g(X,W)g(Y,Z)-
\end{equation}
$$-2g(X,Z)g(Y,W))+3(H+1)(g(X,\varphi Y)g(Z,\varphi W)+g(X,\varphi W)g(Z,\varphi Y))$$
and we have
\begin{equation}\label{e23}
-4R(X,Z,Y,W)-4R(X,W,Y,Z)=-(H-3)(g(X,Z)g(Y,W)+g(X,W)g(Y,Z)-
\end{equation}
$$-2g(X,Y)g(Z,W))-3(H+1)(g(X,\varphi Z)g(Y,\varphi W)+g(X,\varphi W)g(Y,\varphi Z)).$$
Adding \eqref{e22} to \eqref{e23} and using Bianchi identity, we get
\begin{equation}\label{e24}
4R(X,Y,Z,W)=(H-3)(g(X,W)g(Y,Z)-g(X,Z)g(Y,W))+
\end{equation}
$$+(H+1)(g(X,\varphi W)g(\varphi Y,Z)-g(X,\varphi Z)g(\varphi Y,W)+2g(X,\varphi Y)g(Z,\varphi W)).$$
For any vector fields  $X,Y,Z,W\in T_pM$ we have  $\varphi X,\varphi Y,\varphi Z,\varphi W\in\mathbb D$, and using \eqref{e24}, \eqref{e5},
\eqref{e10} and \eqref{e11}, we get \eqref{e16}.
\end{proof}

Recall that, if a para-Sasakian manifold has a constant $\varphi-$para-holomorphic section curvature, then it is $\eta$-Einstein. Similarly, we have the following theorem in our case:

\begin{thm}\label{t5}
Let $(M,\varphi,\eta,\xi,g)$ be a quasi-para-Sasakian manifold. If $M$ is a space of constant $\varphi-$para-holomorphic sectional curvature $H$, then $M$ is $\eta$-Einstein.
\end{thm}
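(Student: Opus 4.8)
The plan is to read off the Ricci tensor directly by contracting the curvature identity \eqref{e16} established in Proposition~\ref{pro6}. Since $M$ has constant $\varphi$-para-holomorphic sectional curvature $H$, formula \eqref{e16} expresses $R(X,Y)Z$ explicitly in terms of $g$, $\eta$, $\xi$ and $\varphi$, so the whole argument reduces to a single trace computation: no geometric input beyond the algebraic relations of the structure is required. Recall that $M$ is $\eta$-Einstein precisely when $Ric=a\,g+b\,\eta\otimes\eta$ for scalar functions $a,b$, so it suffices to exhibit the contracted curvature in this shape.

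First I would fix a $\varphi$-adapted orthonormal basis $(e_1,\dots,e_n,\varphi e_1,\dots,\varphi e_n,\xi)$ of $T_pM$ and set $\epsilon_i=g(e_i,e_i)=\pm1$, bearing in mind that $g$ has signature $(n+1,n)$. The Ricci tensor is then $Ric(Y,Z)=\sum_i\epsilon_i\,g(R(e_i,Y)Z,e_i)$. I would substitute $X=e_i$ into \eqref{e16}, pair with $e_i$, and sum against $\epsilon_i$, using throughout the completeness relation $\sum_i\epsilon_i\,g(A,e_i)g(B,e_i)=g(A,B)$, the fact that $\sum_i\epsilon_i^2=\dim M=2n+1$, the identity $\eta(X)=g(X,\xi)$, the skew-adjointness $g(\varphi X,Y)=-g(X,\varphi Y)$ (which follows from \eqref{con} and gives $g(\varphi e_i,e_i)=0$), and the compatibility relation \eqref{con} itself in the form $g(\varphi A,\varphi B)=-g(A,B)+\eta(A)\eta(B)$.

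Carrying out the contraction term by term, the $(H-3)$-block of \eqref{e16} contributes $2n(H-3)\,g(Y,Z)$, while the seven summands of the $(H+1)$-block collapse—after the completeness and compatibility relations are applied—to $2(H+1)g(Y,Z)-2(n+1)(H+1)\eta(Y)\eta(Z)$. Dividing by $4$ and collecting everything yields
$$Ric(Y,Z)=\frac{H(n+1)-3n+1}{2}\,g(Y,Z)-\frac{(n+1)(H+1)}{2}\,\eta(Y)\eta(Z),$$
which is exactly the $\eta$-Einstein form with $a=\tfrac12\bigl(H(n+1)-3n+1\bigr)$ and $b=-\tfrac12(n+1)(H+1)$; as a consistency check one verifies $a+b=-2n$, in agreement with \eqref{e8}. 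The only delicate point is the bookkeeping in the $(H+1)$-block: the three summands containing $\varphi$ must be reduced using both the skew-adjointness of $\varphi$ and \eqref{con}, and a sign slip there would corrupt the coefficient of $\eta\otimes\eta$. Everything else is routine, so this single contraction constitutes the entire proof.
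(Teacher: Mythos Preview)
Your proposal is correct and follows exactly the paper's approach: the paper's one-line proof simply invokes Proposition~\ref{pro6} and records the contracted Ricci tensor $Ric=\tfrac12(n(H-3)+H+1)\,g-\tfrac12(n+1)(H+1)\,\eta\otimes\eta$, which is the same formula you obtain (with $n(H-3)+H+1=H(n+1)-3n+1$). Your write-up merely spells out the trace computation that the paper leaves implicit.
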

\begin{proof}
By virtue of $Proposition~\ref{pro6}$, $M$ is an $\eta-$Einstein manifold and
$$Ric=\frac{1}{2}(n(H-3)+H+1)g-\frac{1}{2}(n+1)(H+1)\eta\otimes\eta.$$
\end{proof}
From the $Theorem~\ref{t5}$ we have the following
\begin{co}\label{co2}
Let $(M,\varphi,\eta,\xi,g)$ be a quasi-para-Sasakian manifold of a constant $\varphi-$para-holomorphic sectional curvature $H=-1$. Then $M$ is a space of constant curvature.
\end{co}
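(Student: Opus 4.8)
The plan is to deduce the statement directly from the curvature identity \eqref{e16} of \propref{pro6}, by specialising the constant $\varphi$-para-holomorphic sectional curvature to the value $H=-1$. The key structural feature of \eqref{e16} to exploit is that, apart from the first pair $g(Y,Z)X-g(X,Z)Y$ (which is weighted by $H-3$), every remaining term on the right-hand side is collected under the common scalar factor $H+1$. Thus the whole argument rests on the single observation that this factor vanishes precisely when $H=-1$.

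Carrying this out, I would substitute $H=-1$ into \eqref{e16}. The factor $H+1=0$ then annihilates the entire long bracket containing the $\eta$- and $\varphi$-dependent terms, so that \eqref{e16} reduces to $4R(X,Y)Z=(H-3)\bigl(g(Y,Z)X-g(X,Z)Y\bigr)$. Since $H-3=-4$ at $H=-1$, dividing by $4$ yields $R(X,Y)Z=g(X,Z)Y-g(Y,Z)X$ for all $X,Y,Z\in T_pM$, with no restriction on the vectors.

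It then remains to recognise the last expression as the curvature tensor of a space of constant sectional curvature, which is the form $R(X,Y)Z=c\bigl(g(Y,Z)X-g(X,Z)Y\bigr)$ with $c=-1$; hence $M$ is a space of constant curvature $-1$, in agreement with \corref{co1} and \propref{pro5}. The computation itself is immediate, so the only point requiring care is the bookkeeping of signs, namely confirming that the constant is $-1$ rather than $+1$. I would settle this by contracting with $\xi$: setting $Y=Z=\xi$ and using $g(\xi,\xi)=1$ recovers $R(X,\xi)\xi=\eta(X)\xi-X$, consistent with \eqref{e5.1} and with the value $K(X,\xi)=-1$ from \eqref{e6}, which fixes the sign unambiguously.
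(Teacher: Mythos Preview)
Your proposal is correct and is essentially the paper's own argument: the corollary is stated as a consequence of \thmref{t5}, whose proof in turn rests on the curvature formula \eqref{e16} of \propref{pro6}, and substituting $H=-1$ there kills the $(H+1)$-bracket and leaves $R(X,Y)Z=-(g(Y,Z)X-g(X,Z)Y)$. Your write-up is in fact more explicit than the paper's one-line reference, and the sign check against \eqref{e5.1}--\eqref{e6} is a nice touch.
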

We can generalize $Corollary~\ref{co1}$ slightly as follows:
\begin{pro}\label{pro3}
Let $(M,\varphi,\eta,\xi,g)$ be a quasi-para-Sasakian manifold. If $M$ satisfies the Nomizu's condition, i.e., $R(X,Y).R=0$, for any $X,Y\in T_pM$, then it is of constant negative curvature $-1$.
\end{pro}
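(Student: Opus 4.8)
The plan is to exploit the Nomizu condition $R(X,Y)\cdot R=0$ by specializing the derivation $R(X,Y)$ to the case in which the first argument is $\xi$, since the operator $R(\xi,\cdot\,)$ is completely determined by \eqref{e5.1}. Recall that $R(X,Y)$ acts as a derivation of the curvature tensor through
$$(R(\xi,Z)\cdot R)(U,V)W=R(\xi,Z)R(U,V)W-R(R(\xi,Z)U,V)W-R(U,R(\xi,Z)V)W-R(U,V)R(\xi,Z)W,$$
and this vanishes for all $Z,U,V,W\in T_pM$. From \eqref{e5.1} I read off $R(\xi,Z)Y=\eta(Y)Z-g(Z,Y)\xi$, and from \eqref{e5} together with the symmetries of $R$ I obtain the contraction $\eta(R(U,V)W)=\eta(V)g(U,W)-\eta(U)g(V,W)$. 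These two formulas reduce every term above to an explicit combination of $g$, $\eta$ and $R$.

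First I would substitute $Z,U,V,W\in\mathbb D$, i.e.\ all four orthogonal to $\xi$, so that every factor of $\eta(\cdot)$ evaluated on these vectors drops out. Then $R(\xi,Z)U=-g(Z,U)\xi$, and feeding $\xi$ into the remaining curvature operators via \eqref{e5} and \eqref{e5.1} collapses the first three terms to multiples of $\xi$, while the fourth term dies because $R(U,V)\xi$ is a combination of $\eta(U)V$ and $\eta(V)U$, both of which vanish on $\mathbb D$. Reading off the $\xi$-component of the surviving identity leaves
$$R(U,V,W,Z)=g(U,W)g(V,Z)-g(V,W)g(U,Z),\qquad Z,U,V,W\in\mathbb D,$$
which is exactly the constant-curvature-$(-1)$ expression on the paracontact distribution $\mathbb D$.

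It remains to upgrade this to all of $T_pM$. Writing an arbitrary vector as $X=\varphi^2X+\eta(X)\xi$ and expanding $R$ multilinearly, the purely $\mathbb D$-valued part is handled by the displayed formula, while every term carrying at least one factor of $\xi$ is governed by \eqref{e5} and \eqref{e5.1}; a direct check shows that both of these already coincide with the value assigned by $R(X,Y)Z=g(X,Z)Y-g(Y,Z)X$. Hence $R(X,Y)Z=g(X,Z)Y-g(Y,Z)X$ for all $X,Y,Z\in T_pM$, so $M$ is of constant curvature $-1$. I expect the only delicate point to be the careful bookkeeping of signs and of the four derivation terms once the $\eta$-factors are reinstated in this extension step; the specialization to $\mathbb D$ itself is short once $R(\xi,\cdot\,)$ and the contraction $\eta(R(U,V)W)$ are in hand.
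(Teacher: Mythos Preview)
Your argument is correct, and it shares the paper's key idea of specializing the semisymmetry condition to the operator $R(\xi,\cdot)$, whose action is completely known from \eqref{e5.1}. The difference lies in what is extracted and how the conclusion is reached. The paper evaluates only the particular combination $(R(X,\xi)R)(X,Y)Y$ for orthogonal $X,Y\in\mathbb D$, obtains $R(X,Y,Y,X)=-g(X,X)g(Y,Y)$, deduces that the $\varphi$-para-holomorphic sectional curvature equals $-1$, and then appeals to \corref{co2} (hence to the structural formula of \propref{pro6}) to pass to constant curvature $-1$. You instead evaluate the full expression $(R(\xi,Z)R)(U,V)W$ for $Z,U,V,W\in\mathbb D$, which yields the entire curvature tensor on $\mathbb D$ in constant-curvature form at once, and then extend to $T_pM$ by the decomposition $X=\varphi^2X+\eta(X)\xi$ together with \eqref{e5} and \eqref{e5.1}. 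Your route is therefore self-contained and avoids the detour through \propref{pro6}, at the price of tracking four derivation terms rather than one; the paper's route is shorter on the page but leans on the earlier classification result.
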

\begin{proof}
Let $X,Y\in \mathbb D$ and $g(X,Y)=0$. Then, using \eqref{e5} and \eqref{e5.1} above, we obtain
$$(R(X,\xi)R)(X,Y)Y=R(X,\xi)R(X,Y)Y-R(R(X,\xi)X,Y)Y-R(X,R(X,\xi)Y)Y-$$
$$-R(X,Y)R(X,\xi)Y=R(X,Y,Y,X)\xi-R(X,Y,Y,\xi)X-g(X,X)R(\xi,Y)Y=$$
$$=(R(X,Y,Y,X)+g(X,X)g(Y,Y))\xi.$$
From the identity $R(X,Y)R=0$, we get $R(X,Y,Y,X)=-g(X,X)g(Y,Y)$, which implies that $(M,\varphi,\eta,\xi,g)$ is of constant $\varphi-$para-holomorphic sectional curvature $-1$, and hence it is of constant curvature $-1$.
\end{proof}
The PC-Bochner curvature tensor on $M$ is defined by \cite{Z1}
$$\mathbf{B}(X,Y,Z,W)=R(X,Y,Z,W)+\frac{1}{2n+4}(Ric(X,Z)g(Y,W)-Ric(Y,Z)g(X,W)+$$
$$+Ric(Y,W)g(X,Z)-Ric(X,W)g(Y,Z)+Ric(\varphi X,Z)g(Y,\varphi W)-$$
$$-Ric(\varphi Y,Z)g(X,\varphi W)+Ric(\varphi Y,W)g(X,\varphi Z)-Ric(\varphi X,W)g(Y,\varphi Z)+$$
$$+2Ric(\varphi X,Y)g(Z,\varphi W)+2Ric(\varphi Z,W)g(X,\varphi Y)-Ric(X,Z)\eta(Y)\eta(W)+$$
$$+Ric(Y,Z)\eta(X)\eta(W)-Ric(Y,W)\eta(X)\eta(Z)+Ric(X,W)\eta(Y)\eta(Z))+$$
$$+\frac{k-4}{2n+4}(g(X,Z)g(Y,W)-g(Y,Z)g(X,W))-\frac{k+2n}{2n+4}(g(Y,\varphi W)g(X,\varphi Z)-$$
$$-g(X,\varphi W)g(Y,\varphi Z)+2g(X,\varphi Y)g(Z,\varphi W))-\frac{k}{2n+4}(g(X,Z)\eta(Y)\eta(W)-$$
$$-g(Y,Z)\eta(X)\eta(W)+g(Y,W)\eta(X)\eta(Z)-g(X,W)\eta(Y)\eta(Z)),$$
where $k=-\frac{scal-2n}{2n+2}$.

Using the PC-Bochner curvature tensor we have
\begin{thm}\label{t6}
Let $(M,\varphi,\eta,\xi,g)$ be a quasi-para-Sasakian manifold. Then $M$ is with constant $\varphi-$para-holomorphic sectional curvature if and only if it is $\eta$-Einstein and with vanishing PC-Bochner curvature tensor.
\end{thm}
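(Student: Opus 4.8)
The plan is to prove both implications by reducing them to the curvature characterization \eqref{e16} of \propref{pro6}, used in tandem with \thmref{t5}.

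I would first dispose of the necessity (the ``only if'' direction). Suppose $M$ has constant $\varphi$-para-holomorphic sectional curvature $H$. Then \thmref{t5} already records that $M$ is $\eta$-Einstein, with
$$Ric=\frac{1}{2}\big(n(H-3)+H+1\big)g-\frac{1}{2}(n+1)(H+1)\eta\otimes\eta,$$
so only the vanishing of $\mathbf{B}$ remains. For this I would insert the explicit curvature tensor \eqref{e16} (paired with a fourth vector to obtain its fully covariant form) together with the above $Ric$ directly into the defining formula for $\mathbf{B}$. The computation collapses because the $\eta\otimes\eta$ part of $Ric$ is killed by $\varphi$, so that every $\varphi$-Ricci term reduces to $Ric(\varphi X,Y)=ag(\varphi X,Y)$, while the $\eta$-contractions are governed by $Ric(X,\xi)=-2n\eta(X)$ from \eqref{e5.2}. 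Using $scal=2n(a-1)$ and hence $k=-\frac{scal-2n}{2n+2}$, a term-by-term comparison shows that the correction blocks sum precisely to $-R(X,Y,Z,W)$, so that $\mathbf{B}\equiv 0$.

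For the sufficiency (the ``if'' direction) I would assume $M$ is $\eta$-Einstein, $Ric=ag+b\eta\otimes\eta$ with $a+b=-2n$ (and $a,b$ constant for $n>1$ by \eqref{e9}), and that $\mathbf{B}\equiv 0$. Solving $\mathbf{B}=0$ for $R(X,Y,Z,W)$ expresses the curvature tensor entirely through $Ric$, $g$, $\eta$ and $\varphi$. Substituting $Ric=ag+b\eta\otimes\eta$ and $k=-\frac{scal-2n}{2n+2}=-\frac{n(a-2)}{n+1}$, and simplifying by the skew-symmetry $g(\varphi X,Y)=-g(X,\varphi Y)$ together with the compatibility relation \eqref{con}, the resulting expression must take exactly the shape of \eqref{e16} with
$$H=\frac{2a+3n-1}{n+1},$$
the unique value compatible with the Ricci tensor of \thmref{t5}. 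By \propref{pro6} this is precisely the statement that $M$ has constant $\varphi$-para-holomorphic sectional curvature $H$, which closes the argument.

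The hard part will be the purely algebraic bookkeeping common to both substitutions: the tensor $\mathbf{B}$ carries fourteen Ricci terms alongside three further blocks of metric, $\varphi$- and $\eta$-type corrections, and one must keep the plain metric contractions, the $\varphi$-contractions and the $\eta$-contractions rigorously apart. The decisive simplification throughout is that $\eta\circ\varphi=0$ annihilates the $\eta\otimes\eta$ component of $Ric$, so that $Ric(\varphi\,\cdot\,,\,\cdot\,)$ reduces to $ag(\varphi\,\cdot\,,\,\cdot\,)$; this is what lets the many mixed $\varphi$-Ricci terms recombine cleanly into the single $\varphi$-block appearing in \eqref{e16}.
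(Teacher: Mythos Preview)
Your proposal is correct and is exactly the method the paper has in mind: the paper omits the proof entirely, saying only that it proceeds by the same method as Theorem~5.5 in \cite{Z1}, which amounts to precisely the two substitutions you describe (insert \eqref{e16} and the $\eta$-Einstein Ricci tensor into $\mathbf{B}$ for necessity, and solve $\mathbf{B}=0$ with $Ric=ag+b\eta\otimes\eta$ to recover \eqref{e16} for sufficiency). Your computed constants $H=(2a+3n-1)/(n+1)$ and $k=-n(a-2)/(n+1)$ are consistent with \thmref{t5} and with $scal=2n(a-1)$, so the bookkeeping is sound.
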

Since the proof of this Theorem is done using the same method as in the proof of the Theorem 5.5 in \cite{Z1}, we shall omit it.
\section{3-dimensional quasi-para-Sasakian manifolds}\label{sec-6}
In a 3-dimensional pseudo-Riemannian manifold, we have
\begin{equation}\label{e25}
R(X,Y)Z=g(Y,Z)QX-g(X,Z)QY+g(QY,Z)X-g(QX,Z)Y-
\end{equation}
$$-\frac{scal}{2}(g(Y,Z)X-g(X,Z)Y).$$
Setting $Z=\xi$ in \eqref{e24} and using \eqref{e5} and \eqref{e5.2}, we have
\begin{equation}\label{e26}
\eta(Y)QX-\eta(X)QY=(\frac{scal}{2}+1)(\eta(Y)X-\eta(X)Y).
\end{equation}
Setting $Y=\xi$ in \eqref{e26} and then using \eqref{e5.2} (for n=1), we get
$$QX=\frac{1}{2}[(scal+2)X-(scal+6)\eta(X)\xi]$$
i.e.,
\begin{equation}\label{e27}
Ric(Y,Z)=\frac{(scal+2)}{2}g(Y,Z)-\frac{(scal+6)}{2}\eta(Y)\eta(Z).
\end{equation}
\begin{lem}\label{l1}
A 3-dimensional quasi-para-Sasakian manifold is a manifold of constant negative curvature if and only if the scalar curvature $scal=-6$.
\end{lem}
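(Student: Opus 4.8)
The plan is to play off the two ingredients already established for the three-dimensional setting: the $\eta$-Einstein expression \eqref{e27} for the Ricci tensor, valid on every $3$-dimensional quasi-para-Sasakian manifold, and the general identity \eqref{e25} that writes the curvature tensor of any $3$-dimensional pseudo-Riemannian manifold through the Ricci operator $Q$ and the scalar curvature $scal$. I will treat the two implications in turn.

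For sufficiency I would assume $scal=-6$ and substitute into \eqref{e27}: the coefficient $-(scal+6)/2$ of $\eta\otimes\eta$ then vanishes while the coefficient $(scal+2)/2$ of $g$ becomes $-2$, so that $Ric=-2g$, i.e. $QX=-2X$ and $M$ is Einstein. Feeding $QX=-2X$ together with $scal=-6$ back into \eqref{e25}, the four terms built from $Q$ collapse to $-4g(Y,Z)X+4g(X,Z)Y$ and the scalar term contributes $+3\bigl(g(Y,Z)X-g(X,Z)Y\bigr)$; summing these I expect to obtain
\begin{equation*}
R(X,Y)Z=-\bigl(g(Y,Z)X-g(X,Z)Y\bigr),
\end{equation*}
which is precisely the curvature tensor of a space form of constant sectional curvature $-1$. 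Hence $M$ has constant negative curvature.

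For necessity I would argue in the opposite direction: if $M$ has constant sectional curvature $c$, then every plane section, and in particular every $\xi$-section, has curvature $c$, so $K(X,\xi)=c$. Comparing with \eqref{e6}, which gives $K(X,\xi)=-1$, forces $c=-1$; since a $3$-dimensional space of constant curvature $c$ has scalar curvature $6c$, this yields $scal=-6$.

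I do not anticipate a genuine obstacle, since both directions reduce to substitution into \eqref{e25}, \eqref{e27} and \eqref{e6}. The one place demanding care is the sign bookkeeping in the sufficiency computation: keeping track of the signs of the four $Q$-terms and of the $-scal/2$ scalar term in \eqref{e25} so that they really collapse to the single displayed expression, and confirming that the constant $-1$ it produces is the very same $-1$ imposed on the $\xi$-sectional curvature by \eqref{e6}, so that the two implications close up consistently.
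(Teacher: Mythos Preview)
Your argument is correct and is essentially the paper's: the sufficiency direction is exactly the substitution of \eqref{e27} into \eqref{e25} that the paper performs (the paper packages this as the single formula \eqref{e28} valid for arbitrary $scal$, whereas you specialise to $scal=-6$ right away, but the content is identical). For necessity the paper reads the conclusion directly off \eqref{e28} (the $\eta$-terms must vanish), while you instead invoke \eqref{e6} to pin down $c=-1$; both are immediate and equivalent.
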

\begin{proof}
Using \eqref{e27} in \eqref{e25}, we get
\begin{equation}\label{e28}
R(X,Y)Z=\frac{(scal+4)}{2}(g(Y,Z)X-g(X,Z)Y)-
\end{equation}
$$-\frac{(scal+6)}{2}(g(Y,Z)\eta(X)\xi-g(X,Z)\eta(Y)\xi+\eta(Y)\eta(Z)X-\eta(X)\eta(Z)Y)$$
and now the Lemma is obvious.
\end{proof}
Let us consider a 3-dimensional quasi-para-Sasakian manifold which satisfies the  condition
\begin{equation}\label{e29}
R(X,Y).Ric=0,
\end{equation}
for any vector fields $X,Y\in T_pM$.
\begin{thm}\label{t1}
A 3-dimensional quasi-para-Sasakian manifold $(M,\varphi,\eta,\xi,g)$ satisfying the condition $R(X,Y).Ric=0$ is a manifold of constant negative curvature $-1$.
\end{thm}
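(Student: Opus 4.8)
The plan is to reduce everything to a single scalar statement about the scalar curvature: I will show that the hypothesis forces $scal=-6$, after which \lemref{l1} immediately gives that $M$ has constant curvature, and equation \eqref{e28} identifies that constant as $-1$.

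First I would unwind the condition \eqref{e29}. Since $R(X,Y)$ acts as a derivation of the tensor algebra, for any $(0,2)$-tensor $T$ it operates pointwise and algebraically via
$$(R(X,Y).T)(U,V)=-T(R(X,Y)U,V)-T(U,R(X,Y)V).$$
Applying this to the $\eta$-Einstein form \eqref{e27}, I write $Ric=\alpha\,g+\beta\,\eta\otimes\eta$ with $\alpha=\tfrac{scal+2}{2}$ and $\beta=-\tfrac{scal+6}{2}$, noting that these scalar coefficients factor out of the algebraic operation at each point. The decisive observation is that the metric part is invisible to $R(X,Y)$: the curvature skew-symmetry $g(R(X,Y)U,V)=-g(U,R(X,Y)V)$ says precisely that $R(X,Y).g=0$. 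Consequently the strong-looking condition \eqref{e29} collapses to
$$\beta\big(\eta(R(X,Y)U)\,\eta(V)+\eta(U)\,\eta(R(X,Y)V)\big)=0$$
for all $X,Y,U,V$ at every point of $M$.

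Next I would exploit this by a judicious substitution. Setting $V=\xi$ and using \eqref{e5} to compute $\eta(R(X,Y)\xi)=\eta(X)\eta(Y)-\eta(Y)\eta(X)=0$, the second summand vanishes and the relation reduces to $\beta\,\eta(R(X,Y)U)=0$ for all $X,Y,U$. It then suffices to exhibit one triple for which $\eta(R(X,Y)U)\neq0$. Choosing $Y=\xi$ and $X=U=e$ a unit vector in $\mathbb D$ (so that $\eta(e)=0$), equation \eqref{e5.1} yields $R(e,\xi)e=g(e,e)\xi$, whence $\eta(R(e,\xi)e)=g(e,e)=\pm1\neq0$. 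Therefore $\beta=0$, i.e.\ $scal=-6$. With $scal=-6$ the second bracket in \eqref{e28} disappears and $R(X,Y)Z=-\,(g(Y,Z)X-g(X,Z)Y)$, so $M$ is a space of constant negative curvature $-1$, as asserted.

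The only genuinely delicate point is the first one: recognizing that the symmetric Einstein part $\alpha\,g$ of the Ricci tensor contributes nothing to $R(X,Y).Ric$, so that the full curvature condition degenerates into a single scalar equation in $\beta$. Once that is seen, the remainder is a one-line substitution together with the nondegeneracy of $g$ on $\mathbb D$ (which guarantees the existence of the unit vector $e$ with $g(e,e)\neq0$), and then a direct appeal to \lemref{l1} and \eqref{e28}.
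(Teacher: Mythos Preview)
Your argument is correct and reaches the same conclusion $scal=-6$ (and then invokes \lemref{l1}), but by a different mechanism than the paper's own proof. The paper does not use the $\eta$-Einstein decomposition \eqref{e27} at all in this step: it specializes $X=\xi$ in \eqref{e30}, rewrites the two curvature terms via \eqref{e5.1}, replaces every $Ric(\cdot,\xi)$ by $-2\eta(\cdot)$ using \eqref{e5.2}, and then \emph{traces} the resulting identity to obtain $(scal+6)\eta(V)=0$. Your route, by contrast, first inserts \eqref{e27} and then exploits the algebraic fact $R(X,Y).g=0$ to see that the hypothesis is really a constraint on $\beta=-\tfrac{scal+6}{2}$ alone; you then kill the remaining term by a pointwise evaluation rather than a trace. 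Your approach is more conceptual and avoids any contraction; the paper's approach has the (minor) advantage that it never needs to invoke \eqref{e27} or the existence of a non-null vector in $\mathbb D$, and would work verbatim in any dimension to yield $(scal+2n(2n+1))\eta(V)=0$.
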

\begin{proof}
From \eqref{e29}, we have
\begin{equation}\label{e30}
Ric(R(X,Y)U,V)+Ric(U,R(X,Y)V)=0.
\end{equation}
Setting $X=\xi$ and using \eqref{e5.1}, we obtain
\begin{equation}\label{e31}
\eta(U)Ric(Y,V)-g(Y,U)Ric(\xi,V)+\eta(V)Ric(U,\xi)-g(Y,V)Ric(\xi,U)=0.
\end{equation}
Using \eqref{e5.2} in \eqref{e31}, we have
\begin{equation}\label{e32}
\eta(U)Ric(Y,V)+2g(Y,U)\eta(V)+\eta(V)Ric(Y,U)+2g(Y,V)\eta(U)=0.
\end{equation}
Taking the trace in \eqref{e32}, we get
\begin{equation}\label{e33}
Ric(\xi,V)+8\eta(V)+scal\eta(V)=0.
\end{equation}
Using \eqref{e5.2} in \eqref{e33}, we obtain
$$(scal+6)\eta(V)=0.$$
This gives $scal=-6$ (since $\eta(V)\neq 0$), which implies, by $Lemma~\ref{l1}$, that the manifold is of constant negative curvature $-1$.
\end{proof}

\begin{thm}\label{t2}
A 3-dimensional quasi-para-Sasakian manifold $(M,\varphi,\eta,\xi,g)$ is locally $\varphi-$symmetric if and only if  the scalar curvature $scal$ is constant.
\end{thm}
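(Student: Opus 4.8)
The plan is to exploit the fact that in dimension three the full curvature tensor is already pinned down by the scalar curvature through \eqref{e28}, so the only moving part in $\nabla R$ will be the derivative of $scal$ together with the derivatives of the structural tensors $\eta$ and $\xi$. Concretely, I would write $R$ as in \eqref{e28}, namely as $\tfrac{scal+4}{2}$ times the constant-curvature model tensor $g(Y,Z)X-g(X,Z)Y$ plus $-\tfrac{scal+6}{2}$ times the tensor $P(X,Y)Z:=g(Y,Z)\eta(X)\xi-g(X,Z)\eta(Y)\xi+\eta(Y)\eta(Z)X-\eta(X)\eta(Z)Y$, and then differentiate covariantly. Since $g$ is parallel, the model tensor built from $g$ alone has vanishing covariant derivative, so differentiating the first summand produces exactly $\tfrac{W(scal)}{2}\bigl(g(Y,Z)X-g(X,Z)Y\bigr)$.

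Next I would handle the $P$-summand. When $X,Y,Z$ are orthogonal to $\xi$, every term of $P(X,Y)Z$ carries an $\eta$-factor evaluated on one of $X,Y,Z$ and hence vanishes, which kills the contribution $-\tfrac{W(scal)}{2}P(X,Y)Z$ coming from differentiating the coefficient. The remaining contribution $-\tfrac{scal+6}{2}(\nabla_W P)(X,Y)Z$ I would compute using the structure equations $\nabla_W\xi=\varphi W$ from \eqref{z1} and $(\nabla_W\eta)X=-g(W,\varphi X)$ from \eqref{e1}. A short bookkeeping shows that, for $X,Y,Z$ orthogonal to $\xi$, every surviving term of $(\nabla_W P)(X,Y)Z$ is a multiple of $\xi$, namely $\bigl(g(X,Z)g(W,\varphi Y)-g(Y,Z)g(W,\varphi X)\bigr)\xi$. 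Applying $\varphi^2=\mathrm{id}-\eta\otimes\xi$ annihilates anything proportional to $\xi$ and fixes $X,Y$ (as they lie in $\mathbb{D}$). Hence, for all $W,X,Y,Z$ orthogonal to $\xi$, the whole expression collapses to $\varphi^2(\nabla_W R)(X,Y)Z=\tfrac{W(scal)}{2}\bigl(g(Y,Z)X-g(X,Z)Y\bigr)$.

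From this identity both implications are immediate. If $scal$ is constant then $W(scal)=0$ and the right-hand side vanishes, so $M$ is locally $\varphi$-symmetric. Conversely, if $M$ is locally $\varphi$-symmetric, I would choose inside the non-degenerate $2$-plane $\mathbb{D}$ a non-null vector and set $Z=Y$ with $X\perp Y$, so that $g(Y,Z)X-g(X,Z)Y=g(Y,Y)X\neq0$; the vanishing of the left-hand side then forces $W(scal)=0$ for every $W\in\mathbb{D}$. To upgrade this to genuine constancy of $scal$ I would invoke that $\xi$ is Killing by \eqref{e2}, so that its local flow consists of isometries and therefore preserves the scalar curvature, giving $\xi(scal)=0$. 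Since $\mathbb{D}$ together with $\xi$ spans $T_pM$, these two facts yield $V(scal)=0$ for all $V$, i.e. $scal$ is constant.

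The main obstacle I anticipate is the second paragraph: carefully differentiating the $\tfrac{scal+6}{2}$-term and verifying that, once $X,Y,Z$ are taken in $\mathbb{D}$, all of its non-$\xi$ contributions really do cancel, so that $\varphi^2$ leaves only the scalar-derivative term. A secondary point to watch is the para signature: because the induced metric on $\mathbb{D}$ is of signature $(1,1)$, in the converse direction I must select a non-null test vector to make $g(Y,Z)X-g(X,Z)Y$ nonzero, rather than an arbitrary one.
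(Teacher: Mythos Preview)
Your proposal is correct and follows essentially the same route as the paper: differentiate the curvature formula \eqref{e28}, restrict to $X,Y,Z,W\perp\xi$, use \eqref{z1} and \eqref{e1} to see that the $(scal+6)$-part becomes $\xi$-valued, and conclude that $\varphi^2(\nabla_WR)(X,Y)Z=\tfrac{W(scal)}{2}\bigl(g(Y,Z)X-g(X,Z)Y\bigr)$. Your write-up is in fact more complete than the paper's, which stops at this identity: you explicitly argue the converse by choosing non-null $Y\in\mathbb D$ to force $W(scal)=0$ on $\mathbb D$ and then invoke the Killing property \eqref{e2} of $\xi$ to obtain $\xi(scal)=0$, a step the paper leaves implicit.
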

\begin{proof}
Differentiating \eqref{e28} covariantly with respect to $W$ we get
\begin{equation}\label{e45}
(\nabla_WR)(X,Y,Z)=\frac{W(scal)}{2}(g(Y,Z)X-g(X,Z)Y)-
\end{equation}
$$-\frac{W(scal)}{2}(g(Y,Z)\eta(X)\xi-g(X,Z)\eta(Y)\xi+\eta(Y)\eta(Z)X-\eta(X)\eta(Z)Y)-$$
$$-\frac{(scal+6)}{2}(g(Y,Z)(\nabla_W\eta)X\xi-g(X,Z)(\nabla_W\eta)Y\xi+g(Y,Z)\eta(X)\nabla_W\xi-$$
$$-g(X,Z)\eta(Y)\nabla_W\xi+(\nabla_W\eta)Y\eta(Z)X+\eta(Y)(\nabla_W\eta)ZX-$$
$$-(\nabla_W\eta)X\eta(Z)Y-\eta(X)(\nabla_W\eta)ZY).$$
Taking $X,Y,Z,W$ orthogonal to $\xi$ and using \eqref{z1} and \eqref{e1}, we get from the above
\begin{equation}\label{e34}
(\nabla_WR)(X,Y,Z)=\frac{W(scal)}{2}(g(Y,Z)X-g(X,Z)Y)-
\end{equation}
$$-\frac{(scal+6)}{2}(-g(Y,Z)g(X,W)\xi+g(X,Z)g(Y,W)\xi).$$
From \eqref{e34} it follows that
\begin{equation}\label{e35}
\varphi^2(\nabla_WR)(X,Y,Z)=\frac{W(scal)}{2}(g(Y,Z)X-g(X,Z)Y).
\end{equation}
\end{proof}
Again, if the manifold satisfies the condition $R(X,Y).Ric=0$, then we have seen that $scal=-6$, i.e. $scal=constant$
and hence from \eqref{e45} we can state the following
\begin{thm}\label{t3}
A 3-dimensional quasi-para-Sasakian manifold $(M,\varphi,\eta,\xi,g)$ satisfying the condition $R(X,Y).Ric=0$ is locally symmetric.
\end{thm}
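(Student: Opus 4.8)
The plan is to invoke Theorem~\ref{t1}, whose proof already establishes that on a 3-dimensional quasi-para-Sasakian manifold the hypothesis $R(X,Y).Ric=0$ forces $scal=-6$. The entire argument then rests on substituting this single numerical conclusion into the general formula \eqref{e45} for the covariant derivative of the curvature tensor. Crucially, one must work with \eqref{e45} in its full, unrestricted form (valid for arbitrary $W,X,Y,Z$), not with its reduction \eqref{e35}, which was obtained only after restricting the vector fields to be orthogonal to $\xi$.

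The key observation is that $scal=-6$ has two logically distinct consequences, each of which annihilates a different block of \eqref{e45}. First, $scal$ being the constant $-6$ gives $W(scal)=0$ for every vector field $W$; this kills the two leading groups of terms, namely the $\frac{W(scal)}{2}(g(Y,Z)X-g(X,Z)Y)$ term and the subsequent $\frac{W(scal)}{2}(\cdots)$ term. Second, $scal+6=0$ makes the scalar coefficient $-\frac{(scal+6)}{2}$ vanish, which kills the remaining block involving $\nabla_W\eta$ and $\nabla_W\xi$. Both mechanisms apply simultaneously, so every summand on the right-hand side of \eqref{e45} drops out.

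Combining these, I obtain $(\nabla_WR)(X,Y,Z)=0$ for all vector fields $W,X,Y,Z\in T_pM$, which is precisely the definition of a locally symmetric almost paracontact structure, and the theorem follows. There is no genuine obstacle here: the substantive work was carried out in Theorem~\ref{t1} (deriving $scal=-6$) and in establishing the identity \eqref{e45}. The only point requiring care is to recognize that both the derivative-of-$scal$ terms and the $(scal+6)$ term must be checked to vanish, since either alone would not suffice to conclude local symmetry over all of $T_pM$.
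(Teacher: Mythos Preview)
Your argument is correct and is essentially the paper's own proof: from Theorem~\ref{t1} one has $scal=-6$, and substituting this into the unrestricted identity \eqref{e45} makes both the $W(scal)$ terms and the $\frac{scal+6}{2}$ block vanish, yielding $(\nabla_WR)(X,Y,Z)=0$ for all $W,X,Y,Z$. The paper states this in a single sentence preceding the theorem; you have simply spelled out the two separate cancellations explicitly.
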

\begin{defn}
The Ricci tensor $Ric$ of a quasi-para-Sasakian manifold $M$ is called $\emph{$\eta-$parallel}$ if it satisfies
$(\nabla_XRic)(\varphi Y,\varphi Z)=0$ for all vector fields $X,Y$ and $Z$.
\end{defn}
The notation for Ricci-$\eta-$parallelity for Sasakian manifolds was introduce in \cite{K}.
\begin{pro}\label{pro7}
If a 3-dimensional quasi-para-Sasakian manifold $(M,\varphi,\eta,\xi,g)$ has $\eta-$ parallel Ricci tensor, then the scalar curvature $scal$ is constant.
\end{pro}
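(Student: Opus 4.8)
The plan is to substitute the explicit three-dimensional form \eqref{e27} of the Ricci tensor directly into the $\eta$-parallelity condition $(\nabla_X Ric)(\varphi Y,\varphi Z)=0$ and extract from it a pointwise scalar equation controlling $d(scal)$. First I would compute the covariant derivative of $Ric$ from \eqref{e27}. Since $\nabla g=0$, only the coefficients $\tfrac{scal+2}{2}$, $\tfrac{scal+6}{2}$ and the one--form $\eta$ get differentiated; using \eqref{e1} in the form $(\nabla_X\eta)Y=-g(X,\varphi Y)$, and noting that the two coefficients differ by the constant $2$ so that both have derivative $\tfrac{X(scal)}{2}$, this gives
$$(\nabla_X Ric)(Y,Z)=\frac{X(scal)}{2}\bigl(g(Y,Z)-\eta(Y)\eta(Z)\bigr)+\frac{scal+6}{2}\bigl(g(X,\varphi Y)\eta(Z)+\eta(Y)g(X,\varphi Z)\bigr).$$

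The second step is to replace $Y$ by $\varphi Y$ and $Z$ by $\varphi Z$, where the compatibility relations do essentially all of the simplification. The relation $\eta\circ\varphi=0$ from \eqref{f82} annihilates the entire second bracket, while \eqref{con} yields $g(\varphi Y,\varphi Z)=-g(Y,Z)+\eta(Y)\eta(Z)$, so that the surviving first bracket is precisely $g(\varphi Y,\varphi Z)$. Hence
$$(\nabla_X Ric)(\varphi Y,\varphi Z)=\frac{X(scal)}{2}\,g(\varphi Y,\varphi Z).$$
The $\eta$-parallel hypothesis forces the left-hand side to vanish for all $X,Y,Z$, so that $X(scal)\,g(\varphi Y,\varphi Z)=0$ holds identically.

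Finally I would eliminate the geometric factor. Because a compatible metric in dimension three has signature $(2,1)$, its restriction to the nondegenerate distribution $\mathbb{D}=\Ker\eta$ is indefinite but nonzero; in particular there is a unit spacelike vector $Y\in\mathbb{D}$, for which $\eta(Y)=0$ and $g(\varphi Y,\varphi Y)=-g(Y,Y)+\eta(Y)^2=-1\neq 0$. Taking $Z=Y$ this coefficient is nonzero, and therefore $X(scal)=0$ for every $X$, which is exactly the assertion that $scal$ is constant. I expect no genuine obstacle here: the computation is routine, and the only point that needs a little care is the last step, where one should invoke the nondegeneracy (equivalently, the indefinite signature) of $g$ on $\mathbb{D}$ to produce an explicit test vector with $g(\varphi Y,\varphi Y)\neq 0$, rather than merely asserting that $g(\varphi Y,\varphi Z)$ is not identically zero.
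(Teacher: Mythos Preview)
Your argument is correct and follows essentially the same route as the paper: differentiate the explicit Ricci formula \eqref{e27}, impose $\eta$-parallelity, and read off $X(scal)=0$. The only cosmetic differences are that the paper first rewrites $Ric(\varphi X,\varphi Y)$ via \eqref{con} and then differentiates (picking up an extra $\tfrac{scal+2}{2}$ term, which vanishes under the trace), whereas you differentiate $Ric$ first and then substitute $\varphi Y,\varphi Z$; and that the paper concludes by tracing \eqref{e38} while you evaluate at a single unit vector in $\mathbb{D}$.
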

\begin{proof}
From \eqref{e27}, we get, by virtue of \eqref{con} and $\eta \circ \varphi=0$,
\begin{equation}\label{e36}
Ric(\varphi X,\varphi Y)=-\frac{(scal+2)}{2}(g(X,Y)-\eta(X)\eta(Y)).
\end{equation}
Differentiating \eqref{e36} covariantly along $Z$, we get
\begin{equation}\label{e37}
(\nabla_ZRic)(\varphi X,\varphi Y)=-\frac{Z(scal)}{2}(g(X,Y)-\eta(X)\eta(Y))+
\end{equation}
$$+\frac{(scal+2)}{2}(\eta(Y)(\nabla_Z\eta)X+\eta(X)(\nabla_Z\eta)Y).$$
By using $(\nabla_XRic)(\varphi Y,\varphi Z)=0$ and \eqref{e37}, we get
\begin{equation}\label{e38}
-Z(scal)(g(X,Y)-\eta(X)\eta(Y))+
\end{equation}
$$+(scal+2)(\eta(Y)(\nabla_Z\eta)X+\eta(X)(\nabla_Z\eta)Y)=0.$$
Taking the trace in \eqref{e38}, we get $Z(scal)=0$, for all $Z$.
\end{proof}
By virtue $Proposition~\ref{pro7}$ and  $Theorem~\ref{t2}$, we have the following
\begin{thm}\label{t3}
If a 3-dimensional quasi-para-Sasakian manifold $(M,\varphi,\eta,\xi,g)$ has $\eta-$ parallel Ricci tensor is locally $\varphi-$symmetric.
\end{thm}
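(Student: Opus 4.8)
The plan is to obtain this statement as an immediate corollary of the two results that precede it, namely \propref{pro7} and \thmref{t2}, by chaining their implications. There is no genuinely new computation required: the work has already been done in establishing equation~\eqref{e35} and in \propref{pro7}, and the task here is purely to combine them.

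First I would invoke \propref{pro7}: under the hypothesis that the Ricci tensor is $\eta$-parallel, i.e. $(\nabla_X Ric)(\varphi Y,\varphi Z)=0$ for all $X,Y,Z$, that proposition yields $Z(scal)=0$ for every vector field $Z$, so the scalar curvature is constant. Next I would feed this conclusion into \thmref{t2}, whose ``if'' direction asserts that constancy of $scal$ forces $M$ to be locally $\varphi$-symmetric. Composing the two implications gives precisely the assertion that an $\eta$-parallel Ricci tensor implies local $\varphi$-symmetry.

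If one prefers to exhibit the mechanism directly rather than citing \thmref{t2} as a black box, the cleanest route is through equation~\eqref{e35}, which for vector fields $X,Y,Z,W$ orthogonal to $\xi$ reads
\begin{equation*}
\varphi^2(\nabla_W R)(X,Y,Z)=\frac{W(scal)}{2}\bigl(g(Y,Z)X-g(X,Z)Y\bigr).
\end{equation*}
Since \propref{pro7} already supplies $W(scal)=0$ (indeed for all $W$, not merely horizontal ones), the right-hand side vanishes identically, so $\varphi^2(\nabla_W R)(X,Y,Z)=0$ for all $W,X,Y,Z$ orthogonal to $\xi$, which is exactly the definition of local $\varphi$-symmetry.

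I do not expect any obstacle here: the statement is a formal consequence of the preceding proposition and theorem, and the only point meriting a word of care is the logical direction one uses from the biconditional \thmref{t2} (namely the implication ``$scal$ constant $\Rightarrow$ locally $\varphi$-symmetric''), together with the observation that the hypothesis of $\eta$-parallelity enters solely through \propref{pro7}.
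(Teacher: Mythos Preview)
Your proposal is correct and matches the paper's approach exactly: the paper presents this theorem without a separate proof, simply stating it as a consequence of \propref{pro7} and \thmref{t2}. Your optional unpacking via equation~\eqref{e35} is also faithful to how \thmref{t2} was established.
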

\begin{thm}\label{t4}
If a 3-dimensional quasi-para-Sasakian manifold $(M,\varphi,\eta,\xi,g)$ has $\eta-$ parallel Ricci tensor, then it satisfies the condition
\begin{equation}\label{e39}
(\nabla_XRic)(Y,Z)+(\nabla_YRic)(Z,X)+(\nabla_ZRic)(X,Y)=0.
\end{equation}
\end{thm}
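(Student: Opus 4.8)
The plan is to reduce everything to the explicit three-dimensional Ricci formula \eqref{e27} and to exploit that the hypothesis forces the scalar curvature to be constant. First I would invoke $Proposition~\ref{pro7}$: since $M$ has $\eta$-parallel Ricci tensor, $scal$ is constant, so $X(scal)=0$ for every vector field $X$. This is the only place where the hypothesis enters, and it is precisely what makes the cancellation below work.

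Next I would differentiate \eqref{e27} covariantly. Writing $Ric=\tfrac{scal+2}{2}\,g-\tfrac{scal+6}{2}\,\eta\otimes\eta$ and using metric compatibility $\nabla g=0$ together with the constancy of $scal$, every term involving a derivative of the scalar coefficients or of $g$ drops out, leaving
$$(\nabla_XRic)(Y,Z)=-\frac{scal+6}{2}\big((\nabla_X\eta)Y\,\eta(Z)+\eta(Y)\,(\nabla_X\eta)Z\big).$$
Substituting $(\nabla_X\eta)Y=-g(X,\varphi Y)$ from \eqref{e1} turns this into
$$(\nabla_XRic)(Y,Z)=\frac{scal+6}{2}\big(g(X,\varphi Y)\,\eta(Z)+g(X,\varphi Z)\,\eta(Y)\big).$$

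Finally I would form the cyclic sum of this expression over $X,Y,Z$ and collect the six resulting terms according to their $\eta$-factor. The coefficient of $\eta(Z)$ is $g(X,\varphi Y)+g(Y,\varphi X)$, and the coefficients of $\eta(X)$ and $\eta(Y)$ are the analogous sums. Here the key observation is that $g(\cdot,\varphi\,\cdot)$ is antisymmetric: by \eqref{e4.1} one has $g(X,\varphi Y)=-d\eta(X,Y)$, and $d\eta$ is a $2$-form, whence $g(X,\varphi Y)=-g(Y,\varphi X)$. Each of the three grouped coefficients therefore vanishes and the cyclic sum is identically zero, which is exactly \eqref{e39}.

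There is no serious obstacle here; the only point to watch is that the constancy of $scal$ is genuinely needed. Without it the cyclic sum would retain the extra contribution $\tfrac12\sum_{\mathrm{cyc}}X(scal)\big(g(Y,Z)-\eta(Y)\eta(Z)\big)$, which does not cancel in general. Thus the content of the theorem is the combination of $Proposition~\ref{pro7}$ with the skew-symmetry of the fundamental form $g(\cdot,\varphi\,\cdot)$.
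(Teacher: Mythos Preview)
Your proof is correct and follows essentially the same route as the paper: invoke the constancy of $scal$ (the paper re-derives it from the trace of \eqref{e38}, you cite $Proposition~\ref{pro7}$), differentiate \eqref{e27} to obtain \eqref{e42}, and then use \eqref{e1} together with the skew-symmetry of $g(\cdot,\varphi\,\cdot)$ to kill the cyclic sum. Your write-up is in fact more explicit than the paper about why the six terms cancel pairwise.
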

\begin{proof}
Taking the trace in \eqref{e38}, we obtain
\begin{equation}\label{e40}
X(scal)=0,
\end{equation}
for any vector field $X$.
From \eqref{e27}, we have
\begin{equation}\label{e41}
(\nabla_ZRic)(X,Y)=\frac{Z(scal)}{2}(g(X,Y)-\eta(X)\eta(Y))-
\end{equation}
$$-\frac{(scal+6)}{2}(\eta(Y)(\nabla_Z\eta)X+\eta(X)(\nabla_Z\eta)Y).$$
Now using \eqref{e40} and \eqref{e41}, we have
\begin{equation}\label{e42}
(\nabla_ZRic)(X,Y)=-\frac{(scal+6)}{2}(\eta(Y)(\nabla_Z\eta)X+\eta(X)(\nabla_Z\eta)Y)
\end{equation}
By virtue of \eqref{e42}, we get from \eqref{e1} that
$$(\nabla_XRic)(Y,Z)+(\nabla_YRic)(Z,X)+(\nabla_ZRic)(X,Y)=0.$$
\end{proof}

Unlike the case when the dimension is greater than 3, when the dimension is equal to 3 the right-hand parenthesis in \eqref{e14} is trivially equal to 0 and thus nothing follows (from \eqref{e14}) about the scalar curvature. We shall give an example of a 3-dimensional quasi-para-Sasakian manifold, which has scalar curvature equal to $-6$.

\begin{example}
Let $L$ be a 3-dimensional real connected Lie group and ${\g}$ be its Lie algebra with a basis $\{E_1,E_2,E_3\}$ of left invariant vector fields (see \cite{ZN},
by the following commutators:
\begin{equation}
[E_1,E_2]=2E_3, \quad [E_1,E_3]=2E_2, \quad [E_2,E_3]=2E_1.
\end{equation}

We define an almost paracontact structure $(\varphi ,\xi ,\eta)$ and a pseudo-Riemannian metric $g$ in the following way:
\[
\begin{array}{llll}
\varphi E_1=E_2 , \quad \varphi E_2=E_1 , \quad \varphi E_3=0 \\
\xi =E_3 , \quad \eta (E_3)=1 , \quad \eta (E_1)=\eta (E_2)=0 , \\
g(E_1,E_1)=g(E_3,E_3)=-g(E_2,E_2)=1 ,\\
\quad g(E_i,E_j)=0, \quad i\neq j \in \{1,2,3\}.
\end{array}
\]
Then $(L,\varphi ,\xi ,\eta ,g)$ is a 3-dimensional almost paracontact manifold. Since the metric $g$ is left invariant, the Koszul equality becomes
\[
\begin{array}{llll}
\nabla_{E_1}E_1=0 , \quad \nabla_{E_1}E_2=E_3 , \quad \nabla_{E_1}E_3=E_2 , \\
\nabla_{E_2}E_1=-E_3 , \quad \nabla_{E_2}E_2=0, \quad \nabla_{E_2}E_3=E_1  , \\
\nabla_{E_3}E_1=-E_2 , \quad \nabla_{E_3}E_2=-E_1 , \quad \nabla_{E_3}E_3=0.   \\
\end{array}
\]
It is not hard to see that the Ricci tensor $Ric$ is equal to
$$Ric(X,Y)=\frac{scal}{3}g(X,Y),$$
where $scal=-6$.
\end{example}

\section*{Acknowledgments}

S.Z. is partially supported by Contract DN 12/3/12.12.2017 and Contract 80-10-33/2017 and its sequel with the Sofia University "St.Kl.Ohridski".

\end{document}